\newtheorem{thm}{Theorem}[section]
\newtheorem{defn}[thm]{Definition}
\newtheorem{problem}[thm]{Problem}
\newtheorem{prop}[thm]{Proposition}
\newtheorem{cor}[thm]{Corollary}
\newtheorem{example}[thm]{Example}
\newtheorem{claim}{Claim}
\DeclareMathOperator{\adm}{adm}
\DeclareMathOperator{\ch}{ch}
\DeclareMathOperator{\co}{col}
\DeclareMathOperator{\di}{dist}
\DeclareMathOperator{\gc}{gcol}
\DeclareMathOperator{\sco}{scol}
\DeclareMathOperator{\td}{td}
\DeclareMathOperator{\tw}{tw}
\DeclareMathOperator{\wc}{wcol}
\newcommand{\ssetminus}{\!\smallsetminus\!}
\newcommand{\qitem}[1]{\noindent\leavevmode\hangindent1.5\parindent%
 \noindent\hbox to1.5\parindent{#1\hss}\ignorespaces}
\newcommand{\NN}{\mathbb{N}}
\title{Uniform Orderings for Generalized Coloring Numbers\,\thanks{\,The
    research for this paper was started during a visit of HAK to the London
    School of Economics, and continued during a return visit of JvdH to
    Arizona State University. The authors would like to thank both
    universities for their hospitality and support.}}
\author{Jan van den Heuvel\,\thanks{\,Department of Mathematics, London
    School of Economics and Political Science, London WC2A 2AE, UK.} \ \
  and \ \ H.A. Kierstead\,\thanks{\,School of Mathematical and Statistical
    Sciences, Arizona State University, Tempe, AZ 85287, USA.}}
\begin{document}
\maketitle

{\renewcommand{\thefootnote}{\roman{footnote}}
  \footnotemark[0]
  \footnotetext[0]{\,Email: \texttt{j.van-den-heuvel@lse.ac.uk},
    \texttt{kierstead@asu.edu}.}}%

\vspace{-5mm}
\begin{abstract}
  \noindent
  The generalized coloring numbers $\co_r(G)$ (also denoted by $\sco_r(G)$)
  and $\wc_r(G)$ of a graph~$G$ were introduced by Kierstead and Yang as a
  generalization of the usual coloring number, and have found important
  theoretical and algorithmic applications. For each distance~$r$, these
  numbers are determined by an ``optimal'' ordering of the vertices
  of~$G$. We study the question of whether it is possible to find a single
  ``uniform'' ordering that is ``good'' for all distances~$r$.

  We show that the answer to this question is essentially ``yes''. Our
  results give new characterizations of graph classes with bounded
  expansion and nowhere dense graph classes.

  \bigskip\noindent
  Keywords: \emph{generalized coloring numbers}, \emph{vertex orderings},
  \emph{bounded expansion graph classes}, \emph{nowhere dense graph
    classes}
\end{abstract}

\section{Introduction and Main Results}\label{sec1}

\subsection{Coloring Numbers}

All graphs $G=(V,E)$ in this paper are finite, simple and undirected. We
use~$|G|$ for $|V|$. By an \emph{ordering $\sigma$} of a graph we mean a
total ordering of its vertex set, i.e.\ for every $x,y\in V$, $x\ne y$, we
have exactly one of $x<_\sigma y$ or $y<_\sigma x$. The set of all
orderings of~$G$ is denoted $\Pi(G)$ (or just $\Pi$, if the graph is clear
from the context).

For a graph~$G$, $\sigma\in\Pi$ and $x\in V$, let $\co(G,\sigma,x)$ be one
more than the number of neighbors $y\in N_G(x)$ with $y<_\sigma x$. The
\emph{coloring number of $G$}, denoted $\co(G)$, is defined by
\[\co(G)=\min_{\sigma\in\Pi}\,\max_{x\in V}\,\co(G,\sigma,x).\]
In recent terminology, the coloring number of a graph is one more than its
degeneracy; under an older definition of degeneracy they were the
same. Greedily coloring the vertices of~$G$ in an ordering that witnesses
its coloring number, shows that
\[\chi(G)\le \ch(G)\le \co(G),\]
where $\chi(G)$ and $\ch(G)$ denote the chromatic and list chromatic number
of $G$, respectively.

An alternative way to define $\co(G,\sigma,x)$ is as the number of vertices
$y\le_\sigma x$ that have distance at most $1$ from $x$. (Since $x$ has
distance $0$ from itself, we count $x$ in this definition as well, avoiding
having to add ``one more than'' as in our first definition.)  In this paper
we are interested in \emph{generalized coloring numbers}, where we consider
vertices $y\le_\sigma x$ that are at some further distance $r$ from
$x$. These numbers were first introduced in \cite{KY}, after similar
notions were explored by various authors
\cite{MR1198403,K-Comp,45,KT-OGC,Zhu-RAS} in the cases $r=2,4$.

Since there are several choices we can impose on the position of the
internal vertices of a path from~$x$ to~$y$ with respect to an ordering
$\sigma$, we define two variants. Let $r\in\NN\cup\{\infty\}$. For a graph
$G$, ordering $\sigma\in\Pi$ and $x\in V$, we say that a vertex $y$ is
\emph{weakly $r$-reachable} from~$x$ with respect to~$\sigma$ if
$y\le_\sigma x$ and there is an $x,y$-path $P$ with length $|E(P)|\le r$
such that all vertices $p\in V(P)$ satisfy $p\ge_\sigma y$; $y$ is
\emph{strongly $r$-reachable} from $x$ with respect to $\sigma$ if we have
the stronger condition that all $p\in V(P)\ssetminus\{y\}$ satisfy
$p\ge_\sigma x$. Let $W_r[G,\sigma,x]$ be the set of vertices that are
weakly $r$-reachable from $x$ with respect to $\sigma$ and
$S_r[G,\sigma,x]$ be the set of vertices that are strongly $r$-reachable
from $x$ with respect to $\sigma$. Note that $x$ itself is included in both
$W_r[G,\sigma,x]$ and $S_r[G,\sigma,x]$.

The \emph{weak $r$-coloring number of $G$}, denoted $\wc_r(G)$, and the
\emph{strong $r$-coloring number of~$G$}, denoted $\sco_r(G)$, are defined
by\footnote{\,In \cite{KY} strong coloring numbers were just called
  \emph{coloring numbers}, and weak coloring numbers were introduced for
  the purpose of studying (strong) coloring numbers. As weak coloring
  numbers have their own merit, it now seems better to distinguish between
  them by using the terms \emph{strong} and \emph{weak}.}:
\begin{alignat*}{2}
  &\wc_r(G,\sigma)=\max_{x\in V}\bigl|W_r[G,\sigma,x]\bigr|;\quad
  &&\wc_r(G)=\min_{\sigma\in\Pi}\,\wc_r(G,\sigma);\\
  &\sco_r(G,\sigma)=\max_{x\in V}\bigl|S_r[G,\sigma,x]\bigr|;\quad
  && \sco_r(G)=\min_{\sigma\in\Pi}\,\sco_r(G,\sigma).
\end{alignat*}

We obviously have $\co(G)=\wc_1(G)=\sco_1(G)$.

The following easy observations hint at the usefulness of different
versions of coloring numbers. If the vertices of $G$ are colored greedily
so that no vertex $v$ receives the same color as any other vertex in
$S_2[G,\sigma,v]$, then the resulting coloring is an acyclic coloring, so
\[\ch_{\text{a}}(G)\le\sco_2(G),\]
where $\ch_{\text{a}}(G)$ denotes the list acyclic chromatic number of $G$.
If the vertices of $G$ are colored greedily so that no vertex $v$ receives
the same color as any vertex in $W_2[G,\sigma,v]$, then the resulting
coloring is a star coloring, so
\[\ch_{\text{s}}(G)\le\wc_2(G),\]
where $\ch_{\text{s}}(G)$ denotes the star chromatic number of $G$.

As noticed already in \cite{KY}, the two types of generalized coloring
numbers are related by the inequalities
\begin{equation}
\sco_r(G)\le\wc_r(G)\le(\sco_r(G))^r.\label{eq1}
\end{equation}
Thus if one of the generalized coloring numbers is bounded for a class
of graphs (for some~$r$), then so is the other one.

An interesting aspect of generalized coloring numbers is that they can also
be seen as gradations between the coloring number $\co(G)$ and two
important graph invariants, namely the \emph{tree-width $\tw(G)$} and the
\emph{tree-depth $\td(G)$}. (The latter is the minimum height of a
depth-first search tree for a supergraph of $G$ \cite{nevsetvril2006tree}.)
More explicitly, we have the following proposition.

\begin{prop}\label{pro:coltwtd}\mbox{}\\*
  Every graph $G$ satisfies:

  \smallskip
  \qitem{(a)} $\co(G)= \sco_1(G)\le \sco_2(G)\le \ldots\le \sco_\infty(G)=
  \tw(G)+1$;

  \smallskip
  \qitem{(b)} $\co(G)= \wc_1(G)\le \wc_2(G)\le \ldots\le \wc_\infty(G)=
  \td(G)$.
\end{prop}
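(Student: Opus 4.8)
The inequalities in both chains are immediate. The end equalities $\co(G)=\sco_1(G)=\wc_1(G)$ are already observed in the text, and monotonicity holds because, for a fixed ordering $\sigma$, a vertex that is strongly (respectively weakly) $r$-reachable from $x$ is, via the same path, also strongly (respectively weakly) $(r+1)$-reachable from $x$; hence $S_r[G,\sigma,x]\subseteq S_{r+1}[G,\sigma,x]$ and $W_r[G,\sigma,x]\subseteq W_{r+1}[G,\sigma,x]$ for every $x$, and one takes the maximum over $x$ and then the minimum over $\sigma$. So the real content is the two identities at the far ends, and for both of them the plan is the same: match each ordering $\sigma$ of $G$ with the combinatorial object defining the relevant parameter --- a rooted forest $F$ with $G\subseteq\operatorname{clos}(F)$ (where $\operatorname{clos}(F)$ joins every vertex to all of its ancestors) in the case of $\td$, and a chordal supergraph $H$ of $G$ on the vertex set $V(G)$ in the case of $\tw$ --- in such a way that the reachable set of a vertex becomes, vertex by vertex, the set of its proper ancestors in $F$ (respectively its ``downward clique'' in $H$).

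For part~(b) I would start from the reformulation that $y\in W_\infty[G,\sigma,x]$ if and only if $x$ and $y$ lie in the same component of $G[\{v:v\ge_\sigma y\}]$ --- equivalently, if and only if there is an $x,y$-walk whose $\sigma$-smallest vertex is $y$ --- together with the characterization of $\td(G)$ as the minimum height of a rooted forest $F$ with $G\subseteq\operatorname{clos}(F)$ (the height being the largest number of vertices on a root--leaf path), which follows from the depth-first-search description of tree-depth recalled above. To see $\wc_\infty(G)\le\td(G)$, take an optimal $F$ and let $\sigma$ be any linear extension of $F$ in which ancestors precede descendants; using that every edge of a witnessing $x,y$-walk $P$ joins an ancestor to a descendant and that the ancestors of any vertex form a chain, one shows by induction along $P$ that its $\sigma$-smallest vertex $y$ is an ancestor of every vertex of $P$, whence $W_\infty[G,\sigma,x]\subseteq\{x\}\cup A_F(x)$ (with $A_F(x)$ the set of proper ancestors of $x$) and $\wc_\infty(G,\sigma)$ is at most the height of $F$, namely $\td(G)$. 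For the reverse inequality, given an arbitrary $\sigma$, build a rooted forest $F_\sigma$ by letting the parent of $x$ be the $\sigma$-largest vertex in $W_\infty[G,\sigma,x]\setminus\{x\}$ (declaring $x$ a root if this set is empty). The crucial step is the \emph{chain property}: if $y,z\in W_\infty[G,\sigma,x]$ with $y<_\sigma z$, then $y\in W_\infty[G,\sigma,z]$; one proves it by concatenating an $x,z$-walk all of whose vertices are $\ge_\sigma z$ with an $x,y$-walk all of whose vertices are $\ge_\sigma y$, obtaining a $z,y$-walk all of whose vertices are $\ge_\sigma y$, and pruning it to a path. The chain property (with forward transitivity of reachability for the easy inclusion) gives $A_{F_\sigma}(x)=W_\infty[G,\sigma,x]\setminus\{x\}$; consequently $G\subseteq\operatorname{clos}(F_\sigma)$ (each endpoint of an edge of $G$ is weakly $\infty$-reachable from the other), while the height of $F_\sigma$ equals $\max_x|W_\infty[G,\sigma,x]|=\wc_\infty(G,\sigma)$. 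Taking the minimum over $\sigma$ yields $\td(G)\le\wc_\infty(G)$.

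For part~(a) I would run the parallel argument with chordal completions, using the classical fact that $\tw(G)+1$ is the minimum of $\omega(H)$ (clique number) over all chordal supergraphs $H$ of $G$ on the vertex set $V(G)$ --- equivalently, the perfect-elimination/elimination-game characterization of tree-width. The relevant reformulation now is that, for $y<_\sigma x$, $y\in S_\infty[G,\sigma,x]$ if and only if some $x,y$-path of $G$ has all internal vertices $>_\sigma x$. To prove $\sco_\infty(G)\le\tw(G)+1$, take an optimal chordal $H$ with a perfect elimination ordering and let $\sigma$ be its reverse, so that the $\sigma$-smaller $H$-neighbours of any vertex form a clique. A shortcutting argument --- repeatedly, the $\sigma$-largest internal vertex of a witnessing path $P$ has both of its $P$-neighbours $\sigma$-smaller than itself, hence $H$-adjacent, so $P$ can be shortened without leaving $H$ --- shows that $S_\infty[G,\sigma,x]$ is contained in $\{x\}$ together with the $\sigma$-smaller $H$-neighbours of $x$, which form a clique; thus $\sco_\infty(G,\sigma)\le\omega(H)=\tw(G)+1$. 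For $\tw(G)+1\le\sco_\infty(G)$, given an arbitrary $\sigma$, let $H_\sigma$ be the graph on $V(G)$ in which, for $y<_\sigma x$, we put the edge $xy$ exactly when $y\in S_\infty[G,\sigma,x]$. Then $E(H_\sigma)\supseteq E(G)$, and the reverse of $\sigma$ is a perfect elimination ordering of $H_\sigma$: the required fill-in-closure property --- if $v,v'\in S_\infty[G,\sigma,w]\setminus\{w\}$ then $v$ and $v'$ are $H_\sigma$-adjacent --- is again a path-concatenation argument, this time routing the two witnessing paths through the component of $w$ in $G[\{u:u\ge_\sigma w\}]$. Hence $H_\sigma$ is a chordal supergraph of $G$ with $\omega(H_\sigma)=\max_x|S_\infty[G,\sigma,x]|=\sco_\infty(G,\sigma)$, and minimizing over $\sigma$ gives $\tw(G)+1\le\sco_\infty(G)$.

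The routine ingredients are the monotonicity, the two elementary reformulations of $W_\infty$ and $S_\infty$, and the two classical characterizations (of tree-depth via closures of rooted forests, and of tree-width via chordal completions). The substance --- and the step I expect to be the main obstacle --- sits in the two ``closure'' lemmas: the chain property for weak reachability and the fill-in/perfect-elimination property for strong reachability. Both are instances of the same move, namely concatenating two reachability-witnessing walks and pruning the result to a path, and the only genuinely delicate point is to verify that the concatenated walk never drops below the target vertex in the order $\sigma$, so that the extracted path is still a valid reachability witness; once this is in hand, the identification of the reachable sets with ancestor sets (respectively downward cliques) and the two inequalities in each part follow routinely.
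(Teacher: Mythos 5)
The paper itself offers no proof of this proposition: the two chains of inequalities are treated as immediate, and the nontrivial end equalities $\sco_\infty(G)=\tw(G)+1$ and $\wc_\infty(G)=\td(G)$ are simply cited to Grohe et al.\ and to Ne\v{s}et\v{r}il--Ossona de Mendez (Lemma~6.5), so there is no in-paper argument to compare against. Your proposal is correct and is essentially the standard proof underlying those citations --- pairing orderings with chordal completions (the fill-in graph $H_\sigma$ and perfect-elimination shortcutting) for the tree-width identity, and with closures of rooted forests (the chain property of $W_\infty$ and the fact that a witnessing walk's $\sigma$-minimum is an ancestor of all its vertices) for the tree-depth identity --- and the two ``closure'' lemmas you single out as the only delicate steps do indeed go through by the walk-concatenation-and-pruning argument you describe.
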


\noindent
The equality $\sco_\infty(G)=\tw(G)+1$ was first proved in \cite[Section
6]{GKRSS}. The equality $\wc_\infty(G)=\td(G)$ is proved in
\cite[Lemma~6.5]{NO-Sparsity}.

Generalized coloring numbers have been instrumental in the study of sparse
graph classes. Ne\v{s}et\v{r}il and Ossona de Mendez introduced the notion
of \emph{graph classes with bounded expansion}~\cite{NO-I} and the more
general notion of \emph{nowhere dense graph classes}~\cite{NO-ND}. These
concepts generalize those of graph classes with bounded tree-width,
minor-closed classes, bounded degree classes, etc. See the book of
Ne\v{s}et\v{r}il and Ossona~de Mendez~\cite{NO-Sparsity} for a wealth of
information about the properties of these graph classes.

One of the key properties of this classification is that it is remarkably
robust. Not only can results for particular classes that have bounded
expansion (or are nowhere dense) often be generalized to all classes with
that property, but these generalizations often yield new characterizations.
For example, classes with bounded generalized coloring numbers were studied
in \cite{KY} because they had bounded generalized game coloring numbers
(see Section~\ref{sec3} for definitions). Later, Zhu~\cite{Zhu-BGCN} proved
bounds on the generalized coloring numbers that gives the following
characterizations of bounded expansion and nowhere dense classes in terms
of those numbers. We will use these characterizations as definitions.

\begin{defn}\label{def1}\mbox{}
  
  \qitem{(a)} A graph class~$\mathcal{G}$ has \emph{bounded expansion} if
  and only if there exists a function $c:\NN\to\NN$ such that
  $\sco_r(G)\le c(r)$ for all~$r$ and all $G\in\mathcal{G}$.

  \qitem{(b)} A graph class~$\mathcal{G}$ is \emph{nowhere dense} if and
  only if there exists a function $n_0:\mathbb{R}\times\NN\to\NN$ such that
  for every $\epsilon>0$, $r\in\NN$ and $G\in\mathcal{G}$ we have that
  $\sco_r(H)\le|H|^\epsilon$ for all subgraphs $H$ of $G$ with
  $|H|\ge n_0(\epsilon,r)$.
 \end{defn}

\noindent
Note that by the inequalities in~\eqref{eq1} we equally well could have
defined bounded expansion and nowhere dense in terms of the weak coloring
numbers.

Here is a different example demonstrating the surprising power of this
classification of sparse graph classes. Streib and Trotter \cite{ST} proved
that every poset whose cover graph is planar, has dimension bounded by a
function of its height. Then Joret et al.~\cite{JMOWb} used generalized
coloring numbers to prove that every monotone graph class $\mathcal{G}$ is
nowhere dense if and only if for every integer $h\ge1$ and real number
$\epsilon>0$, every $n$-element poset of height at most $h$ whose cover
graph is in $\mathcal{G}$ has dimension $O(n^\epsilon)$.

Generalized coloring numbers are an important tool in the context of
algorithmic sparse graphs theory; see again~\cite{NO-Sparsity}. More
recently they have played a key role in algorithmic results on
model-checking for first-order logic on bounded expansion and nowhere dense
graph classes \cite{DvKralTh,GhKS,Kazana-S}.

\subsection{The Guiding Question}

An obvious question concerning generalized coloring numbers is whether an
ordering that is ``good'' for one distance~$r$ is also ``good'' for a
different distance~$r'$. In fact, this need not be the case: in
Example~\ref{exa:counter} we will show that for all $r,r'\in\NN$ with
$r\ne r'$, there exists a graph~$G$ such that for all $\sigma\in\Pi(G)$
either $\sco_r(G)<\sco_r(G,\sigma)$ or $\sco_{r'}(G)<\sco_{r'}(G,\sigma)$.

The existence of examples as above also has consequences for the many
algorithms that for a graph class~$\mathcal{G}$ with bounded expansion and
some~$r$, use explicitly an ordering~$\sigma$ which shows that
$\sco_r(G)\le c(r)$. It looks as if for every~$r$ a different ordering is
needed.

Given a function $c:\NN\to\NN$, let $\mathcal{G}_c$ be the graph class
defined by: $G\in\mathcal{G}_c$ if and only if $\sco_r(G)\le c(r)$ for all
$r\in\NN$. Then the class $\mathcal{G}_c$ has bounded expansion, and every
class with bounded expansion is contained in $\mathcal{G}_{c'}$ for some
$c'$.

In this paper we investigate the following problem that was raised by
Dvo\v{r}\'ak \cite{Problem}. Kreutzer et al.\ \cite[Section 6]{KrPiRASi}
state that it is ``tempting to conjecture'' that the answer to this problem
is yes.

\begin{problem}\label{prob:Dv}\mbox{}\\*
  Is it true that for all functions $c:\NN\to\NN$, there exists a function
  $c^*:\NN\to\NN$, such that for every graph $G\in\mathcal{G}_c$, there
  exists an ordering $\sigma^*\in\Pi(G)$ such that
  $\sco_r(G,\sigma^*)\le c^*(r)$ for all $r\in\NN$?
\end{problem}

\noindent
The main reason this issue was raised by several people was that for all
known bounds on the generalized coloring numbers on graph classes such as
(topological) minor closed classes, a single ordering of all graphs in the
class gave those bounds for all distances~$r$; see
e.g.~\cite{vdHOdMEA,KrPiRASi}.

\subsection{Results}\label{ssec1.3}

Our main result provides a positive answer for Problem~\ref{prob:Dv}.

\begin{thm}\label{thm:main}\mbox{}\\*
  For any graph~$G$, there exists an ordering $\sigma^*$ of~$G$ such that
  for all $r\in\NN$ we have
  \[\sco_r(G,\sigma^*)\le(2^r+1)\cdot\bigl(\sco_{2r}(G)\bigr)^{4r}.\]
\end{thm}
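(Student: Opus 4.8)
The plan is to construct the uniform ordering $\sigma^*$ by a divide‑and‑conquer / recursive partitioning of $V(G)$, guided by the structure that a good ordering for distance $2r$ already provides. Fix $r$ (the construction must be $r$‑free, so really we argue that one ordering works simultaneously), and let $\sigma$ be an ordering witnessing $\sco_{2r}(G)$. The intuition is that controlling strong $2r$‑reachability under $\sigma$ tells us that balls of radius $r$ only interact with ``few'' earlier vertices; we want to reshuffle $\sigma$ into $\sigma^*$ so that this bound transfers to every distance $r'\le r$ at once, at the cost of a polynomial blow‑up in $\sco_{2r}(G)$. The natural vehicle is a separator/elimination‑tree argument: build a laminar family of vertex subsets whose ``boundaries'' are small (bounded in terms of $\sco_{2r}(G)$), place boundary vertices early, and recurse on the pieces. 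An ordering obtained from such a laminar decomposition is automatically good for all small distances, because a short path from $x$ to an earlier vertex $y$ must cross a boundary, and boundaries are both small and placed before the interior they separate.

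Concretely, I would proceed in the following steps. \emph{Step 1: a weak‑reachability bound.} Using \eqref{eq1} we have $\wc_{2r}(G)\le(\sco_{2r}(G))^{2r}$, so under the fixed ordering $\sigma$ every vertex weakly $2r$‑reaches at most $k:=(\sco_{2r}(G))^{2r}$ vertices. \emph{Step 2: from weak reachability to small separators.} The key structural consequence, which I would isolate as a lemma, is that if $\wc_{2r}(G)\le k$ then for every vertex set $A$ one can find a set $X$ of at most (roughly) $2k$ vertices such that every connected component of $G-X$ that meets $A$ has at most half the vertices of $A$ within distance $r$ of it — i.e.\ small ``$r$‑balanced separators'' in terms of $k$. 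The standard way to get this is to take the $\sigma$‑largest vertex $v$, let $X$ be $W_{2r}[G,\sigma,v]$ together with the analogous reachable sets needed to cut all radius‑$r$ connections, and observe that any path of length $\le r$ leaving the resulting piece must pass through a vertex of $X$. \emph{Step 3: recursive ordering.} Recursively order: put the separator $X$ first (internally in any order), then concatenate recursively‑built orderings of the pieces $G[C_i]$ for the components $C_i$ of $G-X$, and let $\sigma^*$ be the result. \emph{Step 4: bounding $\sco_r(G,\sigma^*)$.} Take any $x$ and any $y$ strongly $r$‑reachable from $x$ under $\sigma^*$, via a path $P$ of length $\le r$ with all internal vertices $>_{\sigma^*}x$. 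Since $y\le_{\sigma^*}x$, the vertex $y$ lies in a separator $X$ placed before $x$'s piece; tracking how many recursion levels lie above $x$'s block and how many separator vertices $x$ can ``see'' at each level, one gets that $x$ can strongly $r$‑reach at most $(\text{depth})\cdot(\text{separator size})$ vertices. The recursion depth is logarithmic only in $|A|$, but since each step halves the radius‑$r$‑ball count, after $r$ halvings the balls have constant size; this is exactly where the factor $2^r$ and the exponent $4r$ come from (each of $r$ levels contributes a factor, and the separator sizes are $O((\sco_{2r}(G))^{2r})$, squared through the $\sco\le\wc$ conversions).

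The main obstacle I expect is Step 2: extracting genuinely \emph{balanced} small separators purely from a weak $2r$‑coloring bound, with the balance measured by radius‑$r$ balls rather than by raw vertex count. One has to be careful that after removing $X$, a short path in $G$ between two surviving vertices cannot sneak around through a component that was ``supposed'' to be separated — this is why one uses distance $2r$ (not $r$) for $\sigma$: a path of length $\le r$ in $G-X$ on the original vertex set, analyzed from its $\sigma$‑maximal vertex, stays within weak $2r$‑reach of that vertex, so if $X$ absorbs that reachable set the path is genuinely confined. Getting the quantitative trade‑off right so that exactly $(2^r+1)\cdot(\sco_{2r}(G))^{4r}$ falls out — in particular that only $r+1$ recursion levels are relevant to any fixed vertex at distance $r$, and that each contributes a bounded number of reachable vertices — will require bookkeeping that links the recursion depth to the radius, not to $\log|G|$. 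Once that linkage is set up cleanly, the rest is the routine counting sketched in Step 4.
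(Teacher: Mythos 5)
Your approach is genuinely different from the paper's, and unfortunately it contains gaps that I don't see how to close.

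The paper does not use separators or a recursive decomposition at all. Its proof reduces Theorem~\ref{thm:main} to a technical lemma (Theorem~\ref{th-main}) that handles several graphs $G_1,\ldots,G_k$ on a common vertex set with designated radii $r_1,\ldots,r_k$ \emph{simultaneously}, and then instantiates it with $G_i=G$, $r_i=i$. The uniformity over all $r$ is exactly the point, and it is obtained by a weighted, interleaved ``collecting'' process: each index $i$ gets an optimal ordering $\sigma_i$ for $\wc_{2r_i}$ and an auxiliary graph $H_i$ whose edges encode weak $r_i$-reachability under $\sigma_i$; the algorithm then cycles through the indices with multiplicities $a_i$, and the ratio $A/a_i$ is what produces the factor $2^r$. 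This is a game-theoretic ``Harmonious Strategy'' argument, not a divide-and-conquer one, and the key counting step (partitioning $S_{r_i}(G_i,\sigma^*,w)$ into $X_1,X_2,X_3$ according to where $\sigma_i\text{-}\min(V(P_u))$ sits) is nothing like the separator bookkeeping you propose.

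There are three concrete problems with your plan. First, and most seriously, your construction of $\sigma^*$ in Steps~2--3 explicitly depends on a fixed $r$ (both $\sigma$ witnessing $\sco_{2r}(G)$ and the radius-$r$ balance criterion are built into the decomposition), and you acknowledge this in a parenthetical but never supply a mechanism that makes a single laminar family simultaneously control all radii. There is no reason a decomposition balanced for radius~$r$ should yield a good ordering for radius $r'\neq r$, which is precisely the difficulty the problem poses; ``one ordering works simultaneously'' is asserted, not argued. Second, the lemma you postulate in Step~2 --- that a bound on $\wc_{2r}(G)$ yields small separators that are \emph{balanced} with respect to radius-$r$ ball counts --- is not a known consequence of bounded weak coloring numbers, and the sketch (take the $\sigma$-largest vertex $v$, set $X$ to be $W_{2r}[G,\sigma,v]$ ``together with the analogous reachable sets'') does not produce balance: weak reachability controls what one vertex can reach, not how the rest of the graph splits. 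Third, even granting balanced separators, the recursion depth argument in Step~4 is circular: the depth is $\Theta(\log|G|)$, and your claim that ``only $r+1$ recursion levels are relevant'' is exactly what needs to be proved, since a priori a path of length $\le r$ from $x$ to a $\sigma^*$-earlier $y$ can terminate in a separator arbitrarily many levels above $x$'s leaf. Without a proof that the relevant contribution is confined to $O(r)$ levels with separator sizes polynomial in $\sco_{2r}(G)$, the approach gives a bound that degrades with $|G|$, which is precisely what the theorem must avoid.
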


\medskip\noindent
In the terminology of Problem~\ref{prob:Dv}, this means we can set
$c^*(r)=(2^r+1)\cdot\bigl(c(2r)\bigr)^{4r}$ for all $r$.

We immediately obtain the following new characterizations of graph classes
with bounded expansion and nowhere dense graph classes.

\begin{cor}\label{cor:1}\mbox{}\\*
  A graph class $\mathcal{G}$ has bounded expansion if and only if there
  exists a function $c^*:\NN\to\NN$, such that for every graph
  $G\in\mathcal{G}$ there exists an ordering $\sigma^*(G)$ of $G$ such that
  $\sco_r(G,\sigma^*(G))\le c^*(r)$ for all~$r$.
\end{cor}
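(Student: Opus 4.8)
\textbf{Proof plan for Corollary~\ref{cor:1}.}

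The plan is to derive both directions directly from Theorem~\ref{thm:main} and Definition~\ref{def1}(a), so the argument is essentially bookkeeping once the hard work of the theorem is in hand.

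For the forward direction, suppose $\mathcal{G}$ has bounded expansion. By Definition~\ref{def1}(a) there is a function $c:\NN\to\NN$ with $\sco_r(G)\le c(r)$ for all $r\in\NN$ and all $G\in\mathcal{G}$. Apply Theorem~\ref{thm:main} to each $G\in\mathcal{G}$: it produces an ordering $\sigma^*(G)$ of $G$ with $\sco_r(G,\sigma^*(G))\le(2^r+1)\cdot(\sco_{2r}(G))^{4r}$ for all $r$. Now bound $\sco_{2r}(G)\le c(2r)$ and set $c^*(r)=(2^r+1)\cdot(c(2r))^{4r}$; this is a well-defined function $\NN\to\NN$ depending only on $c$, and $\sco_r(G,\sigma^*(G))\le c^*(r)$ for every $G\in\mathcal{G}$ and every $r$, which is exactly the stated conclusion.

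For the reverse direction, suppose such a function $c^*$ and orderings $\sigma^*(G)$ exist. For each $G\in\mathcal{G}$ and each $r\in\NN$ we have, directly from the definition of $\sco_r(G)$ as a minimum over orderings, $\sco_r(G)\le\sco_r(G,\sigma^*(G))\le c^*(r)$. Thus the single function $c^*$ witnesses, via Definition~\ref{def1}(a), that $\mathcal{G}$ has bounded expansion.

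I do not anticipate any real obstacle here: the forward direction is the content of Theorem~\ref{thm:main} combined with the uniform bound $\sco_{2r}(G)\le c(2r)$, and the reverse direction is the trivial observation that a per-graph ordering bound dominates the min-over-orderings quantity $\sco_r(G)$. The only point worth stating carefully is that in the forward direction the resulting $c^*$ must not depend on the individual graph $G$ — this is automatic because the bound in Theorem~\ref{thm:main} involves only $r$ and $\sco_{2r}(G)$, and the latter is uniformly bounded by $c(2r)$ across $\mathcal{G}$.
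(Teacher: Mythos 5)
Your proposal is correct and is exactly the argument the paper intends: the forward direction is Theorem~\ref{thm:main} combined with the uniform bound $\sco_{2r}(G)\le c(2r)$ from Definition~\ref{def1}(a) (the paper even records the choice $c^*(r)=(2^r+1)\cdot(c(2r))^{4r}$ right after stating the theorem), and the reverse direction is the trivial min-over-orderings observation. Nothing is missing.
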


\begin{cor}\label{cor:2}\mbox{}\\*
  A graph class $\mathcal{G}$ is nowhere dense if and only if there exists
  a function $n^*_0:\mathbb{R}\times\NN\to\NN$ such that for every
  subgraph~$H$ of a graph $G\in\mathcal{G}$, there exists an ordering
  $\sigma^*(H)$ of~$H$ such that for all $\epsilon>0$ and $r\in\NN$, if
  $|H|\ge n^*_0(\epsilon,r)$, then $\sco_r(H,\sigma^*(H))\le|H|^\epsilon$.
\end{cor}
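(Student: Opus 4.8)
\textbf{Proof proposal for Corollary~\ref{cor:2}.}
The plan is to prove both directions of the equivalence, in each case reducing to the characterization of nowhere dense classes in Definition~\ref{def1}(b) together with Theorem~\ref{thm:main}. The ``only if'' direction is the substantive one and uses Theorem~\ref{thm:main} to upgrade the per-distance optimal orderings to a single uniform ordering; the ``if'' direction is essentially immediate, since a uniform ordering that is good for all $r$ is in particular good for each individual $r$.

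First I would do the ``only if'' direction. Suppose $\mathcal{G}$ is nowhere dense, so by Definition~\ref{def1}(b) there is a function $n_0:\mathbb{R}\times\NN\to\NN$ with $\sco_r(H)\le|H|^\epsilon$ for every subgraph $H$ of some $G\in\mathcal{G}$ with $|H|\ge n_0(\epsilon,r)$. Fix such an $H$ and apply Theorem~\ref{thm:main} to $H$: there is an ordering $\sigma^*=\sigma^*(H)$ of $H$ with $\sco_r(H,\sigma^*)\le(2^r+1)\cdot(\sco_{2r}(H))^{4r}$ for all $r\in\NN$. Now bound the right-hand side: given $\epsilon>0$ and $r\in\NN$, choose $\epsilon':=\epsilon/(8r)$, say, and observe that whenever $|H|\ge n_0(\epsilon',2r)$ we have $\sco_{2r}(H)\le|H|^{\epsilon'}$, hence $(\sco_{2r}(H))^{4r}\le|H|^{4r\epsilon'}=|H|^{\epsilon/2}$. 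Provided $|H|$ is also large enough that $2^r+1\le|H|^{\epsilon/2}$ — which holds once $|H|\ge 2^{(r+1)/( \epsilon/2 )}$, i.e.\ $|H|\ge 2^{2(r+1)/\epsilon}$ — we get $\sco_r(H,\sigma^*)\le|H|^{\epsilon/2}\cdot|H|^{\epsilon/2}=|H|^\epsilon$. So it suffices to set
\[
  n^*_0(\epsilon,r)=\max\Bigl\{\,n_0\bigl(\tfrac{\epsilon}{8r},2r\bigr),\ \bigl\lceil 2^{\,2(r+1)/\epsilon}\bigr\rceil\,\Bigr\},
\]
and then for every subgraph $H$ of a graph in $\mathcal{G}$ with $|H|\ge n^*_0(\epsilon,r)$ the single ordering $\sigma^*(H)$ satisfies $\sco_r(H,\sigma^*(H))\le|H|^\epsilon$ simultaneously for every such pair $(\epsilon,r)$.

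For the ``if'' direction, assume such a function $n^*_0$ and orderings $\sigma^*(H)$ exist. Fix $\epsilon>0$, $r\in\NN$, and any subgraph $H$ of some $G\in\mathcal{G}$ with $|H|\ge n^*_0(\epsilon,r)$. By hypothesis $\sco_r(H,\sigma^*(H))\le|H|^\epsilon$, and since $\sco_r(H)=\min_{\sigma\in\Pi(H)}\sco_r(H,\sigma)\le\sco_r(H,\sigma^*(H))$, we conclude $\sco_r(H)\le|H|^\epsilon$. Thus the same function $n^*_0$ witnesses Definition~\ref{def1}(b), so $\mathcal{G}$ is nowhere dense.

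I do not expect any serious obstacle here: the content is entirely in Theorem~\ref{thm:main}, and the corollary is a bookkeeping exercise in how the polynomial factor $(2^r+1)\cdot(\cdot)^{4r}$ interacts with the $|H|^\epsilon$ bound. The one point to be careful about is that the exponent $4r$ forces us to start from the stronger bound $\sco_{2r}(H)\le|H|^{\epsilon/(8r)}$ rather than $|H|^\epsilon$, and that we must also absorb the additive/multiplicative constant $2^r+1$ — both are handled by enlarging $n^*_0$ as above, using that $|H|^{\epsilon/2}$ eventually dominates any constant depending only on $r$. (The factor $8r$ rather than $4r$ in the choice of $\epsilon'$ is deliberate, leaving room to absorb $2^r+1$; any split of $\epsilon$ into two positive parts works.) It is also worth noting explicitly that Theorem~\ref{thm:main} is applied to the subgraph $H$ itself, not to $G$, which is legitimate since the theorem holds for \emph{any} graph.
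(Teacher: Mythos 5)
Your proposal is correct and follows exactly the route the paper intends: the paper gives no separate proof of Corollary~\ref{cor:2}, stating that it follows immediately from Theorem~\ref{thm:main} together with Definition~\ref{def1}(b), and your argument is a correct filling-in of that deduction (apply Theorem~\ref{thm:main} to each subgraph $H$, absorb the factor $(2^r+1)$ and the exponent $4r$ by shrinking $\epsilon$ and enlarging $n^*_0$, and note the trivial converse). The bookkeeping with $\epsilon'=\epsilon/(8r)$ and the threshold $2^{2(r+1)/\epsilon}$ checks out.
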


\noindent
By the definition of the strong coloring number it follows that if $G$ is a
graph with some ordering~$\sigma^*(G)$, then for every subgraph $H$ of $G$,
if we take $\sigma^*(H)$ the ordering of $H$ induced by~$\sigma^*(G)$, we
have $\sco_r(H,\sigma^*(H))\le\sco_r(G,\sigma^*(G))$ for all~$r$. This
means that in Corollary~\ref{cor:1} once we have an ordering $\sigma^*(G)$
for some graph $G\in\mathcal{G}$, for every $H\in\mathcal{G}$ that is a
subgraph of~$G$ we can take the ordering $\sigma^*(H)$ of $H$ induced by
$\sigma^*(G)$. In view of this it is natural to ask whether a similar
statement is possible for the condition in Corollary~\ref{cor:2} for
nowhere dense classes of graphs. In Subsection~\ref{ssec2.2} we will show
that this is in fact not possible.

Theorem~\ref{thm:main} above follows from a technical, more general, result
that deals with different graphs on the same vertex set; see
Section~\ref{sec4}. Another consequence of this more general result is the
following theorem, which may be of independent interest.

\begin{thm}\label{thm:graphU}\mbox{}\\*
  Let $G_1,\ldots,G_k$ be a collection of graphs, all on the same vertex
  set~$V$, and let $r_1,\ldots,r_k\in\NN$. Then there exists a ordering
  $\sigma^*$ of the common vertex set~$V$ such that for all $i=1,\ldots,k$,
  \[\sco_{r_i}(G_i,\sigma^*)\le
    (k+1)\bigl(\wc_{2r_i}(G_i)\bigr)^2\le
    (k+1)\bigl(\sco_{2r_i}(G_i)\bigr)^{4r_i}.\]
\end{thm}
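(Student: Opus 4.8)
The plan is to build the single ordering $\sigma^*$ from ``local'' orderings, one per graph $G_i$, by a greedy/charging scheme. Fix for each $i$ an ordering $\sigma_i$ of $V$ with $\wc_{2r_i}(G_i,\sigma_i)=\wc_{2r_i}(G_i)=:w_i$. The key structural fact I would use is the standard one about weak reachability: in $\sigma_i$, whenever a vertex $u$ lies on a path witnessing that $z$ is strongly $r_i$-reachable from some vertex, one can split that path at its $\sigma_i$-minimum and realize things via two weakly-reachable sets. Concretely, if $z\in S_{r_i}[G_i,\sigma^*,x]$ via a path $P$ of length $\le r_i$, let $m$ be the $\sigma_i$-least vertex of $P$; then the two subpaths of $P$ from $m$ to $x$ and from $m$ to $z$ each have length $\le r_i$, all of whose vertices are $\ge_{\sigma_i} m$, so $m\in W_{r_i}[G_i,\sigma_i,x]$ and $m\in W_{r_i}[G_i,\sigma_i,z]\subseteq W_{2r_i}[G_i,\sigma_i,z]$. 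Hence every strongly $r_i$-reachable vertex $z$ (w.r.t.\ the target ordering $\sigma^*$) is ``captured'' by some $m\in W_{r_i}[G_i,\sigma_i,x]$ with $z\in W_{2r_i}[G_i,\sigma_i,m]$. This already gives the desired $\bigl(\wc_{2r_i}(G_i)\bigr)^2$-type bound \emph{provided} $\sigma^*$ respects $\sigma_i$ on the relevant vertices — but of course one ordering $\sigma^*$ must simultaneously serve all $k$ graphs, which is the crux.

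To reconcile the $k$ orderings, I would construct $\sigma^*$ iteratively, peeling off a batch of vertices at a time (placing them last in the current suffix). At each step, with $V'$ the set of not-yet-ordered vertices, I would choose a vertex $v\in V'$ and an index $i$ minimizing (say) the $\sigma_i$-rank of $v$ within $V'$, or — cleaner — use a round-robin / priority argument: show that for \emph{some} $i$ there is a vertex $v\in V'$ whose weak-$r_i$-reachable set within $G_i[V']$ (w.r.t.\ $\sigma_i$) is small, namely $\le w_i$, which is automatic since $\wc_{r_i}(G_i[V'],\sigma_i)\le \wc_{r_i}(G_i,\sigma_i)\le w_i$. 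Place such a $v$ last among $V'$; the factor $(k+1)$ will come from the fact that over $k$ graphs the contributions to any single $S_{r_i}[G_i,\sigma^*,x]$ accumulate through at most $k$ ``switches'' of the active index, plus the $+1$ for $x$ itself. I would make this precise by a charging argument: each $z\in S_{r_i}[G_i,\sigma^*,x]\setminus\{x\}$ is charged to a pair $(m, j)$ where $m$ is the split-point above and $j$ is the index that was active when $m$ was placed; then bound, for fixed $x$, the number of such $z$ by $\sum_{j}|W_{r_i}[G_i,\sigma_i,x]|\cdot\max_m|W_{2r_i}[G_i,\sigma_i,m]|$, which I would need to collapse to $(k+1)w_i^2$.

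The main obstacle — and where I would spend the most care — is precisely this last collapse: the naive bound would give something like $k\cdot w_i^2$ or even a product over all $j$ of the $w_j$'s, and getting it down to the clean $(k+1)w_i^2$ requires that the splitting argument be carried out \emph{entirely inside $G_i$ with respect to $\sigma_i$}, so that only $w_i$ enters, while the role of the other $k-1$ orderings is confined to a bounded-multiplicity bookkeeping term. I expect the right move is: first prove the pure single-graph inequality $\sco_{r}(G,\sigma)\le \bigl(\wc_{2r}(G,\sigma)\bigr)^2$ is false as stated for an arbitrary $\sigma$, so one cannot just take $\sigma^*=\sigma_1$; instead one needs $\sigma^*$ to be a common coarsening of the $\sigma_i$ in a controlled sense. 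A workable construction: let $\sigma^*$ be any ordering that refines the partial order ``$u\preceq v$ iff $u\le_{\sigma_i} v$ for all $i$'' — no, that partial order may be empty; rather, build $\sigma^*$ greedily from the end, at each stage removing a vertex that is $\sigma_i$-maximal in the remaining set for \emph{at least one} $i$ (such a vertex always exists, taking the global argmax), and recording that witness index. Then for a fixed $x$ and fixed $i$, walk along any witnessing path $P$ for $z\in S_{r_i}$; its split-point $m$ was removed at some stage with witness index $j=j(m)$, and crucially at that stage $m$ was $\sigma_j$-maximal among survivors — but $x$ and all of $P$ were still present and are $\ge_{\sigma^*} m$ hence were removed earlier, i.e.\ are $\le_{\sigma_j}$... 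This is the delicate point; making the directions line up for $j\ne i$ is the real work, and I anticipate it forces either the factor $(k+1)$ honestly (one unit per index, since each index can be ``responsible'' for at most a $w_i$-sized slab that interacts with $x$) or a slight weakening that the authors then absorb into the stated bound. Once that bookkeeping lemma is in hand, the final chain $(k+1)(\wc_{2r_i})^2\le (k+1)(\sco_{2r_i})^{4r_i}$ is immediate from \eqref{eq1}.
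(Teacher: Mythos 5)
Your high-level intuition is on the right track — split a witnessing path at its $\sigma_i$-minimum to trade one strong $r_i$-reachability for (roughly) two weak reachabilities, and build $\sigma^*$ by some balanced interleaving of the $\sigma_i$'s — but there are two concrete problems, one of which you flag yourself, and they are fatal as things stand.

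First, a directional slip: from $m$ being the $\sigma_i$-least vertex of $P$ you correctly get $m\in W_{r_i}[G_i,\sigma_i,x]$ and $m\in W_{r_i}[G_i,\sigma_i,z]$, but you then write this as $z\in W_{2r_i}[G_i,\sigma_i,m]$, which is the opposite containment (it would require $z\le_{\sigma_i}m$, whereas in fact $m\le_{\sigma_i}z$). Because of this, the ``at most $w_i$ choices of $z$ per $m$'' step that you hope for does not come from $|W_{2r_i}[G_i,\sigma_i,m]|\le w_i$; there is no a priori bound on how many $z$ have $m$ as their $\sigma_i$-minimum on some path. Bounding that count is exactly the hard part, and it is where $\sigma^*$ itself has to do work.

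Second, and more fundamentally, your construction of $\sigma^*$ is not workable as stated. ``Remove a vertex that is $\sigma_j$-maximal in the remaining set for at least one $j$'' is satisfied trivially by always taking the $\sigma_1$-max, producing $\sigma^*=\sigma_1$, which is terrible for $G_2,\ldots,G_k$. There is no balancing force in your proposal, and you candidly note you cannot make the directions line up. The missing ingredient is precisely the credit/budget mechanism the paper uses: it introduces auxiliary graphs $H_i$ on $V$ with $uv\in E(H_i)$ iff $u\in W_{r_i}(G_i,\sigma_i,v)$ (so $\sco_2(H_i,\sigma_i)\le\wc_{2r_i}(G_i)$), and runs a vertex-collecting process in which each vertex carries a vector of counters $\bm m_v$ with budget $a_i$ for index $i$ (total $A=\sum a_i$); a vertex is output into $\sigma^*$ only after being ``processed'' exactly $A$ times, and processing with index $i$ hands control to the $\sigma_i$-minimum uncollected $H_i$-neighbor. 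This is the Harmonious-Strategy-style bookkeeping, and it is what yields the key invariant (Claim 5 in the paper) that at every stage each uncollected $w$ has at most $\frac{A}{a_i}\wc_{2r_i}(G_i)$ collected $H_i$-neighbors in $V_{\sigma_i}^r(w)$. With that in hand the split-point partition into three cases ($p_u=u$, $p_u=w$, $p_u$ interior) closes the argument, and the stated theorem is the special case $a_1=\cdots=a_k=1$, $A=k$, with $k w_i^2+w_i\le(k+1)w_i^2$. Your proposal never reaches, and does not suggest, this counter mechanism, which is the real engine of the proof; the greedy-from-the-end sketch cannot substitute for it.
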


\medskip\noindent
The proof of the general result, which also can be found in
Section~\ref{sec4}, has at its basis arguments developed in \cite{KY,KY2}.

The remainder of this paper is organized as follows. In the next subsection
we give essential terminology and notation. The two classes of examples
referred to earlier can be found in Section~\ref{sec:Example}. In
Section~\ref{sec3} we describe the essential concepts and the result
from~\cite{KY} that provided the inspiration for our proof of the main
theorem. In Section~\ref{sec4} we state and prove our main technical
result, and give the proofs of its corollaries. In the next section we
discuss some algorithmic aspects of our results. We discuss some open
questions in the final section.

\subsection{Terminology and Notation}

Most of our graph theory terminology and notation is standard and can be
found in text books such as \cite{Diestel5}.

If $P=v_1v_2\dots v_n$ is a path, then we call $v_1$ and $v_n$ the
\emph{ends of $P$}. The subpath of $P$ that has ends $a$ and $b$ is denoted
by $aPb$. Finally, $\mathring{P}$ is $P$ minus its ends. The \emph{length}
of a path is the number of edges in it. (So one fewer than the number of
vertices.)

For two vertices $x$ and $y$ in the same component of a graph $G=(V,E)$,
the \emph{distance} $\di_G(x,y)$ between~$x$ and $y$ is the length of a
shortest $x,y$-path in $G$. For $v\in V$, $N_G(v)$ denotes the set of
vertices in~$G$ adjacent to~$v$; $N_G[v]=N_G(v)\cup\{v\}$. For a subset
$X\subseteq V$, $G[X]$ denotes the subgraph of~$G$ induced on the vertex
set $X$.

For a positive integer $k$, we write $[k]=\{1,2,\ldots,k\}$.

If $\sigma$ is an ordering of some set $X$ and $S,T$ are non-empty subsets
of $X$, then by $S<_\sigma T$ we mean that $s<_\sigma t$ for all $s\in S$,
$t\in T$. We abbreviate $\{s\}<_\sigma T$ to $s<_\sigma T$. The element
in~$S$ that is minimum with respect to $\sigma$ is denoted by
$\sigma$-$\min(S)$. The ordering $\sigma_S$ on $S$ \emph{induced
  by~$\sigma$} is the ordering given by: $s_1<_{\sigma_S}s_2$ if and only
if $s_1<_\sigma s_2$, for all $s_1,s_2\in S$.

\section{Examples}\label{sec:Example}

\subsection{Graphs with No ``Good'' Ordering}

The following examples show that in answering Problem~\ref{prob:Dv} we
cannot take $c^*=c$.

\begin{example}\label{exa:counter}\mbox{}\\*
  Let $\varphi$ be the largest solution to $x^2=x+1$ (the golden ratio
  $\varphi=\frac12(1+\sqrt5)\approx 1.62$). For all $r,r'\in\NN$ with
  $r<r'$, there exists a graph $G$ such that for all $\sigma\in\Pi(G)$,
  either
  \[\sco_r(G,\sigma)> .08\bigl(\sco_r(G)\bigr)^\varphi\quad
    \text{or}\quad \sco_{r'}(G,\sigma)>
    .08\bigl(\sco_{r'}(G)\bigr)^\varphi.\]
\end{example}

\begin{proof}
  Fix $t,n\in\NN$ with $4\le t\le n$. Let
  $Z=\{z_i^h\mid i\in[n],\:h\in[t]\}$ be a set of vertices, and partition
  $Z$ into $n$ sets $Z_i=\{z_i^h\mid h\in[t]\}$ of size $t$. We construct
  $G$ by connecting each ordered pair $(Z_i,Z_j)$, $i\ne j$, with
  isomorphic graphs $H_{i,j}$ so that
  $G=\bigcup\{H_{i,j}\mid i,j\in[n],\: i\ne j\}$ and the $H_{i,j}$ are
  pairwise disjoint except for their ends in $Z$. In particular, the
  sets~$Z_i$ and~$Z_j$ are connected by both $H_{i,j}$ and~$H_{j,i}$.

  For all $h\in[t]$ and $i,j\in[n]$, $i\ne j$, add a vertex $x_{i,j}$ and
  choose independent paths $P_{i,j}^h=z_i^h\dots x_{i,j}$ of length~$r$ and
  $Q_{i,j}^h=x_{i,j}\dots z_j^h$ of length $r'-r$. Let
  \[H_{i,j}=\bigcup\nolimits_{h\in[t]}P_{i,j}^h\: \cup\:
    \bigcup\nolimits_{h\in[t]}Q_{i,j}^h.\]
  See Figure~\ref{fig:Case-1} for a sketch. Set
  $Y_{i,j}=V(H_{i,j})\ssetminus\bigl(Z_i\cup Z_j\cup\{x_{i,j}\}\bigr)$, so
  $H_{i,j}[Y_{i,j}]=\bigcup_{h\in[t]}\mathring{P}_{i,j}^h\: \cup\:
  \bigcup_{h\in[t]}\mathring{Q}_{i,j}^h$. Finally, set
  $X=\{x_{i,j}\mid i,j\in[n],\:i\ne j\}$,
  $X_i=\{x_{i,j},x_{j,i}\mid j\in[n]-i\}$ and
  $Y=\bigcup_{\substack{i,j\in[n]\\i\ne j}}Y_{i,j}$. Note that
  $V(G)=X\cup Y\cup Z$.

  \begin{figure}
    \begin{center}
      \begin{tikzpicture}[scale =1]
        \def \vt{circle (1.5pt) [fill]}
        \path (-5,-1) coordinate (Q);
        \draw (Q)++(-.75,+.5) rectangle +(1,4);
        \path (Q)+(3,2.5) coordinate (xi) \vt;
        \draw (xi)+(.02,-.6) node  {$x_{i,j}$};
        \draw (Q)+(-.25,+.5) node[below] {$Z_i$};
        \draw (2.75,-.5) rectangle (1.75,3.5);
        \draw (2.25,-.5) node[below] {$Z_j$};
        \path (Q)+(1,1) node[below] {${\color{blue} P^1_{i,j}}$};
        \foreach \i in {1,2,3,4} {
          \path (Q) + (0,\i) coordinate (z\i);
          \path (Q) + (+2,\i) coordinate (x\i);
          \path (Q) + (+4,\i) coordinate (x'\i);
          \draw (z\i) -- (x\i) -- (xi)--(x'\i);
          \draw (x\i) \vt;
          \draw (z\i) node[left] {$z_i^{\i}$};
        }
        \path (2,-1) coordinate (Q);
        \path (Q) + (-2,+2.5) coordinate (xj);
        \foreach \i in {1,2,3,4} {
          \path (Q) + (0,\i) coordinate (zz\i);
          \draw (zz\i) node[right] {$z_j^{\i}$};
          \draw (zz\i)--(x'\i);
        }
        \draw[blue] (z1)--(x1)--(xi);
        \draw[red] (xi)--(x'4)--(zz4);
        \path (.5,3) node[above] {${\color{red} Q^4_{i,j}}$};
        \foreach \i in {1,2,3,4} {
          \draw (x'\i) \vt;
          \draw (z\i) \vt;
          \draw (zz\i) \vt;
          \draw (z\i) +(1,0) \vt;
          \draw (zz\i) +(-1,0) \vt;
          \draw (zz\i) +(-2,0) \vt;
        }
        \draw (xi) \vt;
      \end{tikzpicture}
      \caption{\label{fig:Case-1} A connecting graph $H_{i,j}$ for $t=4$,
        $r=3$ and $r'=7$.} 
    \end{center}
  \end{figure}
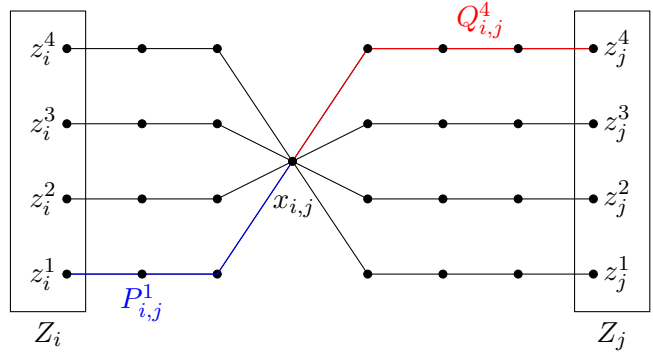

  Observe the following facts:

  {\qitem{(E1)} $\di_G(z_i^h,z_j^{h'})=r'$, for all $h,h'\in[t]$ and
    $i,j\in[n]$, $i\ne j$;

    \qitem{(E2)} every $Z$-path meets $X$, and every $Z_i,Z_j$-path with
    length $r'$ meets one of $x_{i,j},x_{j,i}$;

    \qitem{(E3)}$\di_G(x_{i,j},x)>r'$, for all
    $x\in X\ssetminus(X_i\cup X_j)$.}

  \smallskip
  The result follows from the next three claims by an easy calculation.
  
  \begin{claim}
    Let $\sigma\in\Pi(G)$ satisfy $Z<_\sigma X<_\sigma Y$. Then
    $\sco_r(G)\le\sco_r(G,\sigma)\le2t+1$.
  \end{claim}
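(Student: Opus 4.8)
The plan is to bound $\bigl|S_r[G,\sigma,v]\bigr|$ by $2t+1$ for every vertex $v$; together with the trivial inequality $\sco_r(G)=\min_{\sigma'\in\Pi}\sco_r(G,\sigma')\le\sco_r(G,\sigma)$ this gives the claim. Throughout I would exploit the hypothesis $Z<_\sigma X<_\sigma Y$ together with the two defining conditions of strong $r$-reachability: a vertex $y$ strongly $r$-reachable from $v$ satisfies $y\le_\sigma v$, and every internal vertex of a witnessing path is $\ge_\sigma v$. The point is that these two conditions ``trap'' every witnessing path inside a small, local piece of $G$, so I would split into the three cases $v\in Z$, $v\in Y$, and $v\in X$.

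If $v=z_i^h\in Z$, then $X\cup Y>_\sigma Z$, so no vertex of $X\cup Y$ is $\le_\sigma v$; hence $S_r[G,\sigma,v]\subseteq Z$. By (E1) we have $\di_G(z_i^h,z_j^{h'})=r'>r$ whenever $i\ne j$, so the only vertices of $Z$ within distance $r$ of $v$ lie in $Z_i$, giving $\bigl|S_r[G,\sigma,v]\bigr|\le|Z_i|=t$. If $v\in Y$, then $v$ is an internal vertex of one of the paths $P_{i,j}^h$ or $Q_{i,j}^{h'}$; call it $P$. Since the paths defining the $H_{i,j}$ are internally disjoint, every internal vertex of $P$ other than $v$ has degree $2$ in $G$, so any walk from $v$ must follow $P$ until it reaches one of the two ends of $P$. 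Both ends lie in $Z\cup X$ and are therefore $<_\sigma v$, so neither can be an internal vertex of a witnessing path; hence every vertex of $S_r[G,\sigma,v]$ lies on $P$, reached by the sub-path of $P$ from $v$. Walking along $P$ away from $v$ in either direction, the monotonicity requirement on internal vertices stops the reachable vertices at the first vertex that precedes $v$ in $\sigma$, so at most one vertex on each side of $v$ belongs to $S_r[G,\sigma,v]$, and $\bigl|S_r[G,\sigma,v]\bigr|\le3$.

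The remaining case $v=x_{i,j}\in X$ is the crux. Here I would use the structural fact that, because the $H_{i,j}$ are pairwise disjoint apart from their ends in $Z$, the component of $x_{i,j}$ in $G-Z$ is exactly $\{x_{i,j}\}\cup Y_{i,j}$. Since $Z<_\sigma x_{i,j}$, no $Z$-vertex can occur as an internal vertex of a witnessing path, and two consequences follow. First, any path from $x_{i,j}$ to another vertex of $X$ must pass through a $Z$-vertex (by (E2), or directly from the description of $G-Z$ above), which would be an internal vertex $<_\sigma x_{i,j}$; hence $S_r[G,\sigma,x_{i,j}]\cap X=\{x_{i,j}\}$. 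Second, if $z_k^m\in S_r[G,\sigma,x_{i,j}]$, then the portion of a witnessing path strictly before $z_k^m$ lies in $G-Z$ and hence inside $\{x_{i,j}\}\cup Y_{i,j}$, so $z_k^m$ has a neighbour in $\{x_{i,j}\}\cup Y_{i,j}$; inspecting the construction, this forces $k\in\{i,j\}$, so $S_r[G,\sigma,x_{i,j}]\cap Z\subseteq Z_i\cup Z_j$. As $Y>_\sigma x_{i,j}$ contributes nothing, $\bigl|S_r[G,\sigma,x_{i,j}]\bigr|\le 1+2t$.

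Combining the three cases gives $\sco_r(G,\sigma)=\max_{v}\bigl|S_r[G,\sigma,v]\bigr|\le 2t+1$, and with $\sco_r(G)\le\sco_r(G,\sigma)$ the claim follows. The main obstacle is precisely the last case: one has to notice that the interaction of the ordering $Z<_\sigma X<_\sigma Y$ with the ``internal vertices $\ge_\sigma v$'' condition confines every witnessing path starting at an $X$-vertex to a single gadget $H_{i,j}$, which is exactly what facts (E1)--(E3) together with the disjointness of the $H_{i,j}$ outside $Z$ are arranged to provide; once that localization is in hand the three estimates are routine.
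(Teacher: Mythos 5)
Your proof is correct and follows the same case analysis ($v\in Z$, $v\in Y$, $v\in X$) and the same structural observations as the paper's own argument; the only difference is that you spell out in more detail the localization to $\{x_{i,j}\}\cup Y_{i,j}$ in the $X$-case, which the paper states more tersely as $V(\mathring R)\subseteq V(H_{i,j})\ssetminus(Z_i\cup Z_j)$.
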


  \begin{proof} Consider any vertex $v\in X\cup Y\cup Z$, and suppose
    $w\in S_r[G,\sigma,v]$ is witnessed by the path~$R$.

    If $v\in Z_i\subseteq Z$, then $w\le_\sigma v<_\sigma X\cup Y$, so
    $w\in Z$. By (E1) we have $w\in Z_i$, so
    $\bigl|S_r[G,\sigma,v]\bigr|\le |Z_i|=t$.

    If $v=x_{i,j}\in X$, then $Z<_\sigma v<_\sigma Y$. Thus
    $V(\mathring{R})\subseteq V(H_{i,j})\ssetminus(Z_i\cup Z_j)$, and
    $w\in Z_i\cup Z_j\cup\{x_{i,j}\}$, so
    $\bigl|S_r[G,\sigma,v]\bigr|\le2t+1$.

    If $v\in Y$, then $R\subseteq R'$ for some
    $R'\in\{P_{i,j}^h,Q_{i,j}^h\mid h\in[t]\}$. Thus
    \begin{equation}
      S_r[G,\sigma,v]\subseteq S_{r'}[G,\sigma,v]\subseteq
      \{v,v_1,v_2\},\label{eq:Y}
    \end{equation}
    where $v_1,v_2\le_\sigma v$ and $v_1,v_2\in V(R')$. The vertices
    $v_1,v_2$ exist since the ends of $R'$ come before $v$ with respect to
    $\sigma$. Thus $\bigl|S_r[G,\sigma,v]\bigr|\le3$.

    So in all cases we have $\bigl|S_r[G,\sigma,v]\bigr|\le2t+1$, hence
    $\sco_r(G,\sigma)\le2t+1$.
  \end{proof}

  \begin{claim}
    Let $\sigma\in\Pi(G)$ so that $X<_\sigma Z<_\sigma Y$. Then
    $\sco_{r'}(G)\le\sco_{r'}(G,\sigma)\le4n-6$.
  \end{claim}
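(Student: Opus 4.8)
The plan is to bound $|S_{r'}[G,\sigma,v]|$ for every vertex $v$ under the assumption $X<_\sigma Z<_\sigma Y$. The key structural point, coming from (E2) and (E3), is that a strong $r'$-reachable set can only pick up vertices of $X$, and moreover only those $x_{i,j}$ whose indices are ``compatible'' with the location of $v$.

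First I would dispose of the easy cases. If $v\in Y$, then exactly as in the previous claim, $v$ lies on a single path $R'\in\{P_{i,j}^h,Q_{i,j}^h\}$, and any path witnessing strong reachability must stay inside $R'$ until it reaches a vertex $\le_\sigma v$; since the ends of $R'$ precede $v$, we get $|S_{r'}[G,\sigma,v]|\le 3$. If $v=z_i^h\in Z_i\subseteq Z$, then any $w\in S_{r'}[G,\sigma,v]$ with $w<_\sigma v$ has $w\in X\cup(Z\text{ below }v)$; a path of length $\le r'$ from $z_i^h$ whose internal vertices all exceed $v$ (hence lie in $Z\cup Y$, actually in $Y\cup\{$vertices of $Z$ above $v\}$) — I need to argue that such a path, if it reaches another $Z$-vertex, would have to pass through $X$ by (E2), contradicting that internal vertices are $\ge_\sigma v>_\sigma X$. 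So $w\in X$. By (E3), only the $x_{j,i}$ and $x_{i,j'}$ that actually lie within distance $r'$ of $z_i^h$ can be reached, i.e.\ $w\in X_i$, and $|X_i|=2(n-1)$. Including $v$ itself this gives $|S_{r'}[G,\sigma,v]|\le 2n-1$.

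The main case is $v=x_{i,j}\in X$. Here $X<_\sigma Z<_\sigma Y$ means every other vertex of $X$ that is $\le_\sigma v$ is a candidate, so the naive bound is $|X|\approx n^2$, which is far too big — the whole point of the claim is that this does \emph{not} happen. The crucial observation is (E3): $\di_G(x_{i,j},x_{k,l})>r'$ unless $\{k,l\}\cap\{i,j\}\ne\emptyset$. So any $w\in S_{r'}[G,\sigma,v]\cap X$ lies in $X_i\cup X_j$, and $|X_i\cup X_j|\le 4(n-1)-2 = 4n-6$ (they share no vertex since $x_{i,j}$ would need both indices, but one must double-count carefully: $X_i$ has $2(n-1)$ elements, $X_j$ has $2(n-1)$, and $X_i\cap X_j=\{x_{i,j},x_{j,i}\}$, giving $|X_i\cup X_j| = 4(n-1)-2 = 4n-6$). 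One must also check that $S_{r'}[G,\sigma,v]$ cannot contain vertices of $Z$ or $Y$: a vertex $w\in Z$ with $w<_\sigma v$ is impossible since $X<_\sigma Z$; a vertex $w\in Y$ is impossible since $Y>_\sigma X\ni v$. Hence $S_{r'}[G,\sigma,v]\subseteq X_i\cup X_j$ and $|S_{r'}[G,\sigma,v]|\le 4n-6$.

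The hard part will be the bookkeeping in the $v\in Z$ case — making precise why a length-$\le r'$ path from $z_i^h$ whose internal vertices all lie above $v$ in $\sigma$ cannot ``escape'' $Z_i\cup X_i$: one must use that such a path either stays within $H_{i,j}$ or $H_{j,i}$ type gadgets incident to $Z_i$ (so its far end is in $Z_i$, $X_i$, or in some $Z_j$ reached via $x_{i,j}$ or $x_{j,i}$), and that by (E1) any $Z_j$-vertex is at distance exactly $r'$ so cannot be a strict internal detour — combined with (E2) forcing passage through $X$, which is forbidden for internal vertices since $X<_\sigma v$. Once these reachability restrictions are pinned down, taking the maximum over the three vertex types gives $\sco_{r'}(G,\sigma)\le\max\{3,\,2n-1,\,4n-6\}=4n-6$ (using $n\ge t\ge 4$), and $\sco_{r'}(G)\le\sco_{r'}(G,\sigma)$ is immediate from the definition as a minimum over orderings.
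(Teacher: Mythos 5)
Your argument is essentially the paper's: the same three-way split on $v\in X$, $v\in Z$, $v\in Y$ with the same containments $S_{r'}[G,\sigma,v]\subseteq X_i\cup X_j$, $S_{r'}[G,\sigma,v]\subseteq X_i\cup\{v\}$, and the length-$\le 3$ bound, yielding $4n-6$, $2n-1$, and $3$ respectively. The one slip is citing (E3) in the $v\in Z$ case, where the needed fact is (E2) together with the gadget structure forcing any exit from $Z_i$ to pass through $X_i$ (as you do correctly note in your closing paragraph); (E3) bounds distances between $X$-vertices, not from $z_i^h$, so this is a misattribution rather than a logical gap.
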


  \begin{proof}
    Consider any vertex $v\in X\cup Y\cup Z$, and suppose
    $w\in S_{r'}[G,\sigma,v]$ is witnessed by the path $R$.

    If $v=x_{i,j}\in X$, then $w\le_\sigma v<_\sigma Y\cup Z$, so $w\in X$. By
    (E3) we have $w\in X_i\cup X_j$, so
    \[\bigl|S_{r'}[G,\sigma,v]\bigr|\le |X_i\cup X_j|=
      |X_{i}|+|X_{j}|-|X_{i}\cap X_{j}| =2(2n-2)-2=4n-6.\]
    If $v\in Z_i\subseteq Z$, then $X<_\sigma v<_\sigma Y$, so $w\in X$
    if~$R$ meets $X$. By (E2), $R$ meets $X_i$ if~$R$ meets
    $Z\ssetminus\{v\}$. Thus $w\in X_i\cup \{v\}$.  This gives
    $\bigl|S_{r'}[G,\sigma,v]\bigr|\le |X_i|+1=2n-1\le 4n-6$.
    
    If $v\in Y$, then $|S_{r'}[G,\sigma,v]|\le3$, by \eqref{eq:Y}.

    Thus in all cases we have $|S_{r'}[G,\sigma,v]|\le4n-6$, hence
    $\sco_{r'}(G,\sigma)\le4n-6$.
  \end{proof}

  \begin{claim}
    For any $\sigma\in\Pi(G)$, either $\sco_r(G,\sigma)\ge.246n$ or
    $\sco_{r'}(G,\sigma)\ge.754nt$.
  \end{claim}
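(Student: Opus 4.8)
Here is the plan I would follow.

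Fix an arbitrary $\sigma\in\Pi(G)$. If $\sco_r(G,\sigma)\ge.246n$ we are done, so assume $\sco_r(G,\sigma)<.246n$; I will then exhibit one vertex whose strong $r'$‑reachable set has more than $.754nt$ elements, which gives the claim. The vertex will be $\hat z:=z_{i_1}^{h_1}$, the last vertex of $Z$ with respect to $\sigma$. Every vertex of $Z$ then lies $\le_\sigma\hat z$, so a vertex $z_j^h$ with $j\ne i_1$ belongs to $S_{r'}[G,\sigma,\hat z]$ as soon as \emph{some} $\hat z,z_j^h$‑path of length $\le r'$ has all its internal vertices $>_\sigma\hat z$; the candidate path is $P^{h_1}_{i_1,j}\cup Q^{h}_{i_1,j}$ (length exactly $r'$, through the hub $x_{i_1,j}$), and one uses that the $P$‑ and $Q$‑paths have lengths $r$ and $r'-r$ respectively.

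\emph{Step 1: almost all $P$‑halves lie above $\hat z$.} Call $j\ne i_1$ \emph{good} if $x_{i_1,j}>_\sigma\hat z$ and every vertex of $\mathring P^{h_1}_{i_1,j}$ is $>_\sigma\hat z$. If $j$ is not good, I produce a vertex of $S_r[G,\sigma,\hat z]$: if some internal vertex of $P^{h_1}_{i_1,j}$ is $\le_\sigma\hat z$, take the one $w_j$ closest to $\hat z$ along this path — the $\hat z,w_j$‑subpath has length $<r$ and interior $>_\sigma\hat z$, so $w_j\in S_r[G,\sigma,\hat z]$; otherwise $x_{i_1,j}\le_\sigma\hat z$, and then $x_{i_1,j}\in S_r[G,\sigma,\hat z]$ via $P^{h_1}_{i_1,j}$ itself. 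Since the paths $P^{h_1}_{i_1,j}$ ($j\ne i_1$) are internally disjoint, these witnesses are pairwise distinct and distinct from $\hat z\in S_r[G,\sigma,\hat z]$; hence the number of non‑good $j$ is at most $\sco_r(G,\sigma)-1<.246n-1$, so there are more than $(n-1)-(.246n-1)=.754n$ good indices.

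\emph{Step 2: the $Q$‑halves, absorbing obstructions.} Fix a good $j$ and $h\in[t]$. If every vertex of $\mathring Q^{h}_{i_1,j}$ is $>_\sigma\hat z$, then $P^{h_1}_{i_1,j}\cup Q^{h}_{i_1,j}$ is a $\hat z,z_j^h$‑path of length $r'$ with interior $>_\sigma\hat z$, so $z_j^h\in S_{r'}[G,\sigma,\hat z]$. Otherwise let $u(j,h)$ be the vertex of $\mathring Q^{h}_{i_1,j}$ closest to $x_{i_1,j}$ with $u(j,h)\le_\sigma\hat z$; then the path from $\hat z$ along $P^{h_1}_{i_1,j}$ to $x_{i_1,j}$ and on along $Q^{h}_{i_1,j}$ to $u(j,h)$ has length $<r'$ and interior $>_\sigma\hat z$ (goodness of $j$ for the $P$‑part and the hub; the choice of $u(j,h)$ for the rest), so $u(j,h)\in S_{r'}[G,\sigma,\hat z]$. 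Either way we obtain a member $\xi(j,h)\in\{z_j^h,u(j,h)\}$ of $S_{r'}[G,\sigma,\hat z]$. Using the internal disjointness of the $P$‑ and $Q$‑paths together with $V(G)=X\cup Y\cup Z$, the vertices $\xi(j,h)$ over all good $j$ and all $h\in[t]$ are pairwise distinct and distinct from $\hat z$, giving $|S_{r'}[G,\sigma,\hat z]|>t\cdot.754n=.754nt$, hence $\sco_{r'}(G,\sigma)>.754nt$.

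\emph{Where the difficulty lies.} The length bounds on all the ad hoc paths are immediate from the path lengths $r$ and $r'-r$, and no case split on $r$ versus $r'-r$ is needed (and facts (E1)--(E3) are not used). The delicate point — the thing I would be most careful to get right — is the double counting of internal path vertices: the key mechanism is that an internal vertex $u(j,h)$ that \emph{blocks} the path to $z_j^h$ is itself in $S_{r'}[G,\sigma,\hat z]$, precisely because it sits low in $\sigma$ below a hub that sits high, so every obstruction is automatically compensated; and one must verify that these substitutes, the surviving $z_j^h$'s, and $\hat z$ really are all distinct, which is exactly where the internal disjointness of the paths and the tripartition of $V(G)$ enter. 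The analogous substitution in Step 1 (with $S_r$ in place of $S_{r'}$) is what pushes the count of good indices above $.754n$ in the first place.
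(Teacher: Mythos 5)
Your proof is correct and follows essentially the same route as the paper: take $\hat z$ the $\sigma$-maximum vertex of $Z$, split the indices $j\ne i_1$ according to whether the $P$-half lies entirely above $\hat z$ (your ``good'' $j$ is exactly the paper's set $J$), use the non-good indices to lower-bound $|S_r[G,\sigma,\hat z]|$, and for each good $j$ and $h\in[t]$ produce a witness on the $Q$-half (the surviving $z_j^h$, or the $\sigma$-low internal vertex of $Q_{i_1,j}^h$ nearest the hub) lying in $S_{r'}[G,\sigma,\hat z]$. The only cosmetic difference is that the paper handles your two sub-cases in Step~2 uniformly by taking the vertex of $Q_{i,j}^{h'}$ (endpoint included) closest to $x_{i,j}$ that is $<_\sigma\hat z$.
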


  \begin{proof}
    Let $z_i^h$ be the $\sigma$-largest vertex of $Z$,
    $J=\{j\in[n]-i\mid z_i^h\le_\sigma V(P_{i,j}^h)\}$ and
    $\overline{J}=[n]\ssetminus J$. For all
    $j\in\overline{J}\ssetminus \{i\}$ there exists a vertex
    $u_j\in V(P_{i,j}^h)$ with $u_j<_\sigma z_i^h$; choose $u_j$ as close
    (along the path $P_{i,j}^h$) to $z_i^h$ as possible. Then
    $\{u_j\mid j\in\overline{J}\}\cup\{z_i^h\}\subseteq
    S_r[G,\sigma,z_i^h]$. Thus $\sco_r(G,\sigma)\ge|\overline{J}|+1$, and
    so we are done if $|\overline{J}|\ge.246n-1$.

    Otherwise $|J|\ge(n-1)-(.246n-1)=.754n$. For all $j\in J$ and
    $h'\in[t]$, let $v_{i,j}^{h'}$ be the vertex of $Q_{i,j}^{h'}$ with
    $v_{i,j}^{h'}<_\sigma z_i^h$ that is closest to $x_{i,j}$ (along the
    path $Q_{i,j}^{h'}$); it exists because
    $z_j^{h'}<_\sigma z_i^h<x_{i,j}$ by the choice of $z_i^h$ and the
    definition of $J$.  Then we have
    $\{v_j^{h'}\mid j\in J,$ $h'\in[t]\}\subseteq
    S_{r'}[G,\sigma,z_i^h]$. Thus $\sco_{r'}(G,\sigma)\ge t|J|\ge.754nt$.
  \end{proof}

  \noindent
  Now choose $t,n$ such that $t>1000$ and $t^\varphi\le n<
  t^\varphi+1$. Let $C=.08$. Then we have (using $\varphi<1.6181$):
  \begin{equation}
    .246 > C 2.001^\varphi\quad \text{and}\quad
    .754>C 4^\varphi.\label{eqs}
  \end{equation}
  Consider any $\sigma\in\Pi(G)$. By Claim 3, $\sco_r(G,\sigma)\ge.246n$ or
  $\sco_{r'}(G,\sigma)\ge.754nt$. In the first case, Claim 1 yields (using
  $n\ge t^\varphi$, \eqref{eqs} and $t>1000$):
  \[\sco_r(G,\sigma)\ge .246n\ge .246t^\varphi>
    C2.001^\varphi t^\varphi= C \bigl(2.001t\bigr)^\varphi>
    C(2t+1)^\varphi\ge C\bigl(\sco_r(G)\bigr)^\varphi.\]
  In the second case, Claim 2 yields (using \eqref{eqs}, $n\ge t^\varphi>
  n-1$ and $\varphi^2=\varphi+1$):
  \[\sco_{r'}(G,\sigma)\ge .754nt> C4^\varphi nt\ge
    C4^\varphi t^{\varphi+1}= C(4t^\varphi)^\varphi>
    C\bigl(4(n-1)\bigr)^\varphi>
    C\bigl(\sco_{r'}(G)\bigr)^\varphi.\qedhere\]
\end{proof}

\smallskip
\subsection{Nowhere Dense Classes and Orderings}\label{ssec2.2}

In the discussion after Corollary~\ref{cor:2} we raised the possibility of
strengthening the corollary to the following.
``\emph{A graph class $\mathcal{G}$ is \emph{nowhere dense} if and only if
  there exists a function $n^*_0:\mathbb{R}\times\NN\to\NN$ such that for
  every graph $G\in\mathcal{G}$ there exists an ordering $\sigma^*(G)$ of
  $G$ such that for every subgraph~$H$ of $G$, the ordering $\sigma^*(H)$
  of $H$ induced by $\sigma^*(G)$ has the property that for all
  $\epsilon>0$ and $r\in\NN$ such that $|H|\ge n^*_0(\epsilon,r)$ we have
  $\sco_r(H,\sigma^*(H))\le|H|^\epsilon$.}'' In this subsection we show
that such a strengthening is not possible, even for monotone nowhere dense
classes. (A class is \emph{monotone} if it closed under taking subgraphs.)

\begin{example}\label{exa:dense}\mbox{}\\*
  There exists a monotone graph class $\mathcal{G}$ that is nowhere dense
  and with the following property. There does not exist a function
  $n^*_0:\mathbb{R}\times\NN\to\NN$ such that for every graph
  $G\in\mathcal{G}$ there exists an ordering~$\sigma^*(G)$ of $G$ such that
  for every subgraph~$H$ of $G$, the ordering $\sigma^*(H)$ of $H$ induced
  by $\sigma^*(G)$ has the property that for all $\epsilon>0$ and $r\in\NN$
  such that $|H|\ge n^*_0(\epsilon,r)$ we have
  $\sco_r(H,\sigma^*(H))\le|H|^\epsilon$.
\end{example}

\begin{proof}
  Let $\mathcal{G}$ be the class of graphs whose maximum degree is at most
  their girth. (The \emph{girth} of a graph is the length of the smallest
  cycle in it.)
  Note that this class is obviously monotone. It is shown in \cite[pages
  105--106]{NO-Sparsity} that this class is nowhere dense (but not with
  bounded expansion!). One other well-known fact we use is that this class
  contains graphs with arbitrarily large minimum degree.

  Now suppose for a contradiction that there exists a function
  $n^*_0:\mathbb{R}\times\NN\to\NN$ satisfying the properties in the
  statement above. Take $0<\epsilon<1$ and $r\in\NN$, and choose an integer
  $d$ such that $d\ge n_0^*(\epsilon,r)$. Let $G$ be a graph in
  $\mathcal{G}$ with minimum degree at least~$d$. By supposition there is
  an ordering $\sigma^*(G)$ of $G$ satisfying the properties in the
  statement.

  Now let $v$ be the vertex that is last in the ordering $\sigma^*(G)$, and
  set $H=G\bigl[N_G[v]\bigr]$. Then~$H$ has at least
  $d+1>n_0^*(\epsilon,r)$ vertices. In the ordering $\sigma^*(H)$ of $H$
  induced by $\sigma^*G)$, the vertex~$v$ is still the last one, which
  gives $\sco_r(H,\sigma^*(H))= |N_G[v]|\ge d+1$. Since $|H|^\epsilon<d+1$
  for $\epsilon<1$, we cannot have $\sco_r(H,\sigma^*(H))\le|H|^\epsilon$.
\end{proof}

\section{Inspiration for the Proof of the Main Theorem}\label{sec3}

The inspiration for the proof of Theorem~\ref{thm:main} comes from the
theory of \emph{generalized game coloring numbers}, which were introduced
in \cite{KY}. In this section we define these numbers, and use a basic
result about them to give a very easy proof of a simplified version of
Theorem~\ref{thm:main}. The full proof follows in Section~\ref{sec4}.

The \emph{$r$-ordering game} is played on a graph $G$ by two players, Alice
and Bob. The game lasts for $n=|G|$ turns. The players take turns choosing
unchosen vertices with Alice playing first until there are no unchosen
vertices left. This creates an ordering $\sigma\in\Pi(G)$ of~$G$,
where~$v_i$ is the vertex chosen at the $i$-th turn and
$v_1<_\sigma v_2<_\sigma\dots<_\sigma v_n$. The \emph{score} of the game is
$\sco_r(G,\sigma)$. Alice's goal is to minimize the score while Bob's goal
is to maximize the score. The \emph{game $r$-coloring number} of $G$,
denoted $\gc_r(G)$, is the least $s$ such that Alice can always achieve a
score of at most $s$, regardless of how Bob plays.

The next result bounds the generalized game coloring numbers for any graph
class with bounded expansion.

\begin{thm}[Kierstead \& Yang \cite{KY}]\label{thm:gcolBound}\mbox{}\\*
  All graphs $G$ satisfy
  $\gc_r(G)\le 3\bigl(\wc_{2r}(G)\bigr)^2\le
  3\bigl(\sco_{2r}(G)\bigr)^{4r}$ for all $r$.
\end{thm}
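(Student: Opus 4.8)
The plan is to analyze the $r$-ordering game and exhibit a concrete strategy for Alice that guarantees a score of at most $3\bigl(\wc_{2r}(G)\bigr)^2$; the second inequality $\wc_{2r}(G)\le\bigl(\sco_{2r}(G)\bigr)^{2r}$ is just~\eqref{eq1} applied with $2r$ in place of $r$, so the real content is the first bound. I would begin by fixing an ordering $\pi\in\Pi(G)$ that witnesses $\wc_{2r}(G)$, i.e.\ $\bigl|W_{2r}[G,\pi,x]\bigr|\le\wc_{2r}(G)=:q$ for every vertex $x$. The idea is that Alice uses $\pi$ as a ``guide'': on each of her turns she looks at the last vertex $b$ that Bob just played (on her first turn, she plays the $\pi$-minimum vertex), and she plays the $\pi$-minimum vertex in the set $W_r[G,\pi,b]$ that has not yet been chosen; if all of those are already chosen she plays an arbitrary unchosen vertex. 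This is the activation/guide strategy from~\cite{KY}: each Bob move ``activates'' at most $q$ guide-vertices, and Alice responds by claiming the smallest unclaimed active one.

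The key step is the structural lemma bounding, for the resulting ordering $\sigma$ and any vertex $v$, the size of $S_r[G,\sigma,v]$. Take $w\in S_r[G,\sigma,v]$ witnessed by a path $R$ of length $\le r$ with $\mathring R>_\sigma v$ (so $w<_\sigma v$ and the internal vertices come after $v$). I would split the analysis according to the $\pi$-order of the vertices on $R$, or more precisely by locating the $\pi$-minimum vertex $u$ on $R$. Then $R$ decomposes into the subpath $wRu$ and $uRv$, each of length $\le r$, and since $u$ is $\pi$-minimum on $R$ we get $u\in W_r[G,\pi,v]$ and $w\in W_r[G,\pi,u]$; in particular $u$ lies in the $\pi$-witness set of $v$, of which there are at most $q$ choices, and for each such $u$ there are at most $q$ choices for $w$. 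That already gives the quadratic factor $q^2$. The role of the game strategy is to control the ``middle'': one shows that because of how Alice plays, the number of times this argument can produce genuinely distinct $w$'s without one of them having been claimed by Alice is bounded by a small constant — this is where the factor $3$ (rather than $q^2$) comes from, via a charging argument that says at most two of the relevant guide-vertices associated to $v$ can be left for Bob to exploit, the third being $v$ itself.

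I expect the main obstacle to be precisely this charging/counting step: making rigorous the claim that Alice's guide strategy forces all but $O(1)$ of the potentially-reachable vertices to be already claimed (and hence to appear before $v$ in a way that collapses into one of the $q^2$ classes rather than multiplying the count). One has to be careful that ``reachable in $G$ with respect to $\sigma$'' is being controlled by ``reachable in $G$ with respect to $\pi$,'' and that the path-splitting at the $\pi$-minimum vertex interacts correctly with the $\sigma$-position constraints coming from the definition of $S_r$. Since this is quoted as a known theorem of Kierstead and Yang, I would present the argument at the level of the strategy description plus the two-part path decomposition, cite~\cite{KY} for the detailed bookkeeping, and simply remark that replacing the game-score bound by the corresponding static bound — which is all Theorem~\ref{thm:main} ultimately needs — only requires the inequality $\sco_r(G)\le\gc_r(G)$, immediate since a non-adversarial Alice can realize any ordering she likes.
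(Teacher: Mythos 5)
The paper itself does not prove Theorem~\ref{thm:gcolBound}; it is quoted from~\cite{KY} and used as a black box, so there is no internal proof of this statement to compare against. Evaluating your proposal on its own terms, the path-decomposition step contains a directional error that leaves a real gap in the counting.

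Splitting a witnessing path $R$ (from $v$ to $w$) at its $\pi$-minimum vertex $u$ does give $u\in W_r[G,\pi,v]$, but the other half gives $u\in W_r[G,\pi,w]$, \emph{not} $w\in W_r[G,\pi,u]$: since $u$ is $\pi$-minimum on $R$ we have $w\ge_\pi u$, whereas membership $w\in W_r[G,\pi,u]$ would require $w\le_\pi u$. Consequently the clause ``for each such $u$ there are at most $q$ choices for $w$'' does not follow from $\wc_{2r}(G)\le q$. That bound is a \emph{back-degree} bound --- for a fixed $w$ it limits the number of $\pi$-small witnesses $u$ --- but it says nothing about, for fixed $u$, how many $w$ admit $u$ as a witness; a star whose center is $\pi$-smallest already has a single $u$ serving every leaf $w$. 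Controlling this forward multiplicity is precisely what Alice's activation strategy must accomplish, and that is the hard part of the theorem, not a cleanup step producing the constant $3$. The algorithmic analogue in this paper is Claim~\ref{cl5} inside the proof of Theorem~\ref{th-main}: the bookkeeping with the vectors $\bm{m}_v$ bounds, for each $u$, how many \emph{already-collected} vertices can have $u$ as a ``small'' witness, and that dynamic bound --- not a static property of $\pi$ --- supplies the second factor of $q$. So the charging argument you expect to yield the factor $3$ must instead be repositioned to yield that second $q$; once that is recognized, the remaining bookkeeping is indeed the strategy of~\cite{KY}, which you correctly defer to. Your reduction of the second inequality to $\wc_{2r}(G)\le\bigl(\sco_{2r}(G)\bigr)^{2r}$ via~\eqref{eq1} is fine.
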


Now we are ready to prove the result that inspired our general approach.

\begin{thm}\mbox{}\\*
  For any graph $G$ and $r,r'\in\NN$, there exists an ordering
  $\sigma^*\in\Pi(G)$ such that
  \[\sco_r(G,\sigma^*)\le 3\bigl(\sco_{2r}(G)\bigr)^{4r}\quad
    \text{and}\quad \sco_{r'}(G,\sigma^*)\le
    3\bigl(\sco_{2r'}(G)\bigr)^{4r'}+1.\]
\end{thm}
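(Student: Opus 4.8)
The plan is to reduce the statement to the bound on the game coloring numbers from Theorem~\ref{thm:gcolBound}, by playing a single vertex‑ordering game in which the two players independently take care of the two distances. Run the $r$-ordering game on $G$. Alice, who moves on the odd turns, plays a strategy that by the definition of $\gc_r(G)$ together with Theorem~\ref{thm:gcolBound} guarantees $\sco_r(G,\sigma)\le\gc_r(G)\le 3\bigl(\sco_{2r}(G)\bigr)^{4r}$ no matter how her opponent plays. Bob, who moves on the even turns, disregards his usual goal of maximizing and instead tries to keep $\sco_{r'}$ small: he plays the strategy that the first player would use to guarantee a score of at most $\gc_{r'}(G)$ in the $r'$-ordering game, adapted to the fact that here he moves second rather than first --- on his first move he plays an arbitrary available vertex, and thereafter he responds to Alice's moves exactly as that first‑player strategy prescribes.

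Write $\sigma^*$ for the ordering produced. The first inequality is then immediate: Alice's strategy meets its guarantee against every opponent, in particular against this Bob, so $\sco_r(G,\sigma^*)\le\gc_r(G)\le 3\bigl(\sco_{2r}(G)\bigr)^{4r}$. For the second inequality, the point is that Bob's play mimics a play of the first‑player $r'$-ordering game in which at most one of Alice's moves fails to be ``answered'' by Bob --- his arbitrary opening move corresponds to a move that carries no information, and, if the game has odd length, Alice's final move has no Bob‑move after it. Relocating or deleting that single vertex changes each set $S_{r'}[G,\sigma^*,v]$ by at most one vertex, so $\sco_{r'}(G,\sigma^*)\le\gc_{r'}(G)+1\le 3\bigl(\sco_{2r'}(G)\bigr)^{4r'}+1$.

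I expect the main obstacle to be making Bob's adaptation of a first‑player strategy to the second position rigorous. A naive ``strategy‑stealing'' that feeds Alice's moves into a private copy of the $r'$-game and replays the prescribed vertex runs into a timing mismatch --- when Bob is about to make his $i$-th move Alice has already made $i$ moves but Bob's private game has recorded only $i-1$ of them, so the prescribed vertex may already be taken. Two ways around this are plausible: either build in enough slack (through the wasted opening move, and by letting the private game lag one full round behind the real one) so that the prescribed vertex is always still available, or open up the proof of Theorem~\ref{thm:gcolBound} and observe that the strategy of \cite{KY} used there is robust in exactly this setting --- Bob applies it while treating Alice's most recent move as the move he must react to, which he can always do except after Alice's last move --- and that this costs only the single additive term already accounted for above. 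Either route reduces the remaining work to routine bookkeeping.
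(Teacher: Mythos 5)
Your high-level plan is the paper's: generate $\sigma^*$ by an ordering game in which Player~A follows Alice's optimal strategy for the $r$-ordering game and Player~B tries to keep $\sco_{r'}$ small by following Alice's optimal strategy for the $r'$-ordering game. The first inequality is indeed immediate, and you correctly locate the only real issue: Player~B moves second, so Alice's first-mover strategy does not directly apply, and a naive private replay mis-times.

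However, you stop exactly at that issue. Neither of your two proposed fixes is carried through, and the first (``let the private game lag one full round'') does not, as stated, avoid the problem you yourself raise -- when the private game lags, the prescribed vertex can still have been taken in the real game, because two real moves (one from each player) happen per private round. The second fix (re-open the proof of Theorem~\ref{thm:gcolBound} and argue that the strategy in \cite{KY} is robust under this timing shift) is a genuinely different and heavier route, not ``routine bookkeeping.'' Your accounting for the additive error is also off: you list two possible un-answered moves (Bob's arbitrary opening and, for odd $n$, Alice's final move), which by your own argument would give $+2$ rather than the claimed $+1$, and the claim that ``relocating'' a vertex changes each $S_{r'}[G,\sigma^*,v]$ by at most one is not true in general.

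The missing idea is to change the graph Player~B plays on. Player~B simply ignores Alice's first move $v_1$ and runs Alice's optimal strategy for the $r'$-ordering game on $G-v_1$, interpreting A's moves from turn~$3$ onward as Bob's moves. This makes the parities line up exactly: B moves on outer turns $2,4,\dots$, which are the first-player turns of the inner game on $G-v_1$, and A moves on outer turns $3,5,\dots$, which are the second-player turns. There is then a single exceptional vertex $v_1$, and because $v_1$ is $\sigma^*$-minimum it can only appear as the endpoint of a strongly reachable path, never as an interior vertex; hence $\sco_{r'}(G,\sigma^*)\le\sco_{r'}(G-v_1,\sigma^*|_{G-v_1})+1\le\gc_{r'}(G-v_1)+1\le 3\bigl(\sco_{2r'}(G)\bigr)^{4r'}+1$, using monotonicity of $\sco$ under subgraphs. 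This is the step your write-up lacks.
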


\medskip
\begin{proof}
  We will create the ordering by having two players A and B play the
  ordering game. Player A plays by following Alice's optimal strategy in
  the $r$-ordering game on $G$ and interprets Player B's moves as Bob's
  moves in this game. Player B ignores Alice's first move, and from then on
  plays by following Alice's optimal strategy in the the $r'$-ordering game
  on the remaining graph and interprets player A's moves as Bob's moves in
  this game.

  By Theorem~\ref{thm:gcolBound}, the resulting ordering $\sigma^*$ has the
  desired properties, where we need to be aware that Player~B had to ignore
  the first chosen vertex, which may lead to one more reachable vertex.
\end{proof}

\section{The Main Theorem}\label{sec4}

In this section we prove our main results, which are all corollaries of the
following technical theorem.

\begin{thm}\label{th-main}\mbox{}\\*
  Let $G_1,\ldots,G_k$ be a collection of graphs, all on the same vertex
  set~$V$, and $a_1,\ldots,a_k$ and $r_1,\ldots,r_k$ be positive
  integers. Set $A=a_1+\cdots+a_k$. Then there exists an ordering
  $\sigma^*$ of the common vertex set~$V$ such that for all $i=1,\ldots,k$
  we have
  \[\sco_{r_i}(G_i,\sigma^*)\le
    \frac{A}{a_i}\bigl(\wc_{2r_i}(G_i)\bigr)^2+ \wc_{2r_i}(G_i).\]
\end{thm}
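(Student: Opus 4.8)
The plan is to mimic the multi-player ordering-game argument sketched in Section~\ref{sec3}, but with $k$ players instead of two, using the game coloring number bound of Theorem~\ref{thm:gcolBound} in its refined form $\gc_r(G)\le 3\bigl(\wc_{2r}(G)\bigr)^2$. First I would set up an auxiliary $A$-round ``weighted'' ordering game on the common vertex set~$V$: think of player~$i$ as controlling a ``virtual strategy budget'' proportional to $a_i$. Concretely, I would run a round-robin-type protocol in which, over each block of $A$ consecutive turns, player~$i$ gets to make $a_i$ of the choices, and player~$i$ plays those choices according to Alice's optimal strategy in the $r_i$-ordering game on $G_i$, interpreting \emph{all} moves made by the other players during that block as a sequence of Bob-moves in that game. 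The key point, exactly as in the two-player proof, is that from player~$i$'s perspective the game she is playing on $G_i$ is a legitimate $r_i$-ordering game (vertices are chosen one at a time, she responds optimally, the adversary — here an amalgam of the other $k-1$ players — does whatever it wants), so the resulting ordering $\sigma^*$ satisfies $\sco_{r_i}(G_i,\sigma^*)\le\gc_{r_i}(G_i)\le 3\bigl(\wc_{2r_i}(G_i)\bigr)^2$ — up to a correction term coming from the turns player~$i$ does \emph{not} control.

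The heart of the argument, and the place where the weights $a_i$ and the factor $A/a_i$ enter, is bookkeeping that correction term. In Alice's strategy for the $r_i$-ordering game, Alice moves after each Bob move; here player~$i$ moves only once every $A/a_i$ turns on average, so between consecutive moves of player~$i$ there can be up to (roughly) $A/a_i - 1$ ``unanswered'' Bob moves. I would handle this by the standard device of having player~$i$ simulate a longer game: whenever it is player~$i$'s turn she has, since her last turn, seen a batch of at most $A/a_i$ new vertices chosen by others; she treats the first of these as Bob's move, responds (mentally) with Alice's optimal reply, then treats the next as Bob's move, responds again, and so on, and her actual physical move is this last ``mental Alice reply'' — or, if the batch is exhausted and it is genuinely her turn to initiate, she plays Alice's move in a fresh round. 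The net effect is that the ordering $\sigma^*$, restricted to the moves player~$i$ is simulating as a genuine Alice/Bob alternation, is a play of the $r_i$-ordering game in which Alice follows her optimal strategy; but the vertices that were chosen in a batch \emph{before} player~$i$ could respond behave like an enlarged ``first move'' / uncontrolled prefix. I expect the clean way to write this is: partition the turns into the $a_i$-per-block structure, observe that within the relevant simulated game the score contributed is at most $\gc_{r_i}(G_i)$, and that each of the at most $A/a_i$ positions in a block that player~$i$ didn't get to pre-empt can add at most one extra weakly-reachable vertex — no, more carefully, the slack multiplies the game-coloring bound, giving the $\tfrac{A}{a_i}\bigl(\wc_{2r_i}(G_i)\bigr)^2$ shape, with an additive $\wc_{2r_i}(G_i)$ absorbing the single genuinely-ignored first move (the analogue of the ``$+1$'' in the two-player version).

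The main obstacle I anticipate is making the simulation/scheduling rigorous rather than hand-wavy: one must specify precisely the interleaving schedule (which player moves on which turn), verify that each player can always legally make her simulated Alice-move (the vertex Alice's strategy prescribes has not yet been taken — this needs that player~$i$ is the one physically placing it, so no conflict arises), and then carry out the accounting that converts ``player~$i$ sees batches of size $\le A/a_i$'' into the factor $A/a_i$ in front of $3\bigl(\wc_{2r_i}(G_i)\bigr)^2$ — and here I'd want to double-check whether the constant $3$ from Theorem~\ref{thm:gcolBound} survives or is absorbed. Looking at the target bound, the constant $3$ has \emph{disappeared}, so the real argument must avoid invoking $\gc$ as a black box and instead reprove the relevant piece directly, following the ``arguments developed in \cite{KY,KY2}'' that the paper explicitly flags; that is, rather than quoting $\gc_{r_i}(G_i)\le 3\bigl(\wc_{2r_i}(G_i)\bigr)^2$, I would open up that proof — Alice's strategy builds a set of at most $\wc_{2r_i}(G_i)$ ``reachability fans'' and responds by playing into them, and the weak-$2r_i$-coloring ordering supplies the structure — and rerun it in the multi-player setting, where player~$i$'s enlarged batches cost a factor $A/a_i$ in the size of the relevant reachable sets and the genuinely-ignored prefix costs the additive $\wc_{2r_i}(G_i)$. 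So my plan is: (i) recall the internal structure of the Kierstead–Yang strategy (fix for each $G_i$ a vertex ordering witnessing $\wc_{2r_i}(G_i)$, and the associated notion of ``reachable-from-below'' sets); (ii) define the $A$-slotted scheduling with $a_i$ slots per block for player~$i$; (iii) have player~$i$ run the KY strategy on $G_i$ treating the $\le A/a_i$ intervening choices as adversary moves, and show she can always execute her prescribed move; (iv) bound, for each vertex $v$, the set $S_{r_i}[G_i,\sigma^*,v]$ by the KY counting argument, picking up the multiplicative $A/a_i$ from batch size and the additive $\wc_{2r_i}(G_i)$ from the uncontrolled start; (v) conclude. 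Step~(iv) is where the genuine work lies.
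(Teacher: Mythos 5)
Your high-level plan --- generalize the two-player game argument to a weighted $k$-player game, opening up the Kierstead--Yang proof rather than quoting the $\gc_r$ bound as a black box --- is in the right spirit, and you correctly guessed both that the constant $3$ must disappear and the rough shape $\frac{A}{a_i}\bigl(\wc_{2r_i}\bigr)^2+\wc_{2r_i}$ of the bound. But there is a genuine gap: step~(iv), which you yourself flag as ``where the genuine work lies,'' is never carried out, and it is the entire proof. Moreover, the mechanism you propose (a fixed round-robin schedule with batch simulation, each vertex placed into $\sigma^*$ at the moment it is picked) differs in a load-bearing way from what the paper does. The paper does not run a vertex-selection game at all. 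It fixes, for each $i$, an ordering $\sigma_i$ witnessing $\wc_{2r_i}(G_i)$, builds an auxiliary graph $H_i$ on $V$ with an edge $uv$ whenever $u\in W_{r_i}(G_i,\sigma_i,v)$, observes that $\sco_2(H_i,\sigma_i)\le\wc_{2r_i}(G_i)$, and then runs a deterministic process in which each vertex carries a budget vector $\bm{m}_v$ with $\bm{m}_v(i)=a_i$, is \emph{processed} exactly $A$ times before being \emph{collected} (and $\sigma^*$ is the collection order, not a move order), with each processing of $v$ at index $i$ passing control to the $\sigma_i$-least uncollected vertex of $N_{H_i}[v]$.

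The factor $A/a_i$ then falls out of a precise counting claim: every uncollected $w$ has at most $\frac{A}{a_i}\wc_{2r_i}(G_i)$ collected $H_i$-neighbors $v>_{\sigma_i}w$, because each such $v$ was processed $a_i$ times with index $i$, each such round passed control to a vertex in $\{w\}\cup S_2(H_i,\sigma_i,w)$, and each uncollected vertex receives control at most $A$ times over its lifetime. Your batch-size heuristic, by contrast, leaves the real issue unaddressed: Alice's KY strategy answers every single Bob move, and with batches of $A/a_i$ intervening moves you would need to re-verify that she can still execute her prescribed response and that the loss is exactly multiplicative --- neither is argued. The paper sidesteps the ``whose turn is it'' conflict entirely because placement (collection) is a budget-delayed event, not a turn-by-turn choice. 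Finally, your attribution of the additive $\wc_{2r_i}(G_i)$ to an ``uncontrolled prefix'' is off: in the paper it arises from the part of $S_{r_i}(G_i,\sigma^*,w)$ whose witnessing paths are $\sigma_i$-minimized at their far endpoint $u$, i.e.\ vertices directly weakly $r_i$-reachable from $w$ under $\sigma_i$, together with $w$ itself; the remaining two cells of the three-way partition of $S_{r_i}(G_i,\sigma^*,w)$ supply the $\frac{A}{a_i}\bigl(\wc_{2r_i}(G_i)\bigr)^2$ via the counting claim above.
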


\medskip
\begin{proof}
  In what follows, for a graph $G$, ordering $\sigma\in\Pi(G)$, $r\in\NN$
  and $x\in V(G)$ we use $S_r(G,\sigma,x)$ and $W_r(G,\sigma,x)$ to denote
  $S_r[G,\sigma,x]\ssetminus\{x\}$ and $W_r[G,\sigma,x]\ssetminus\{x\}$,
  respectively. We also set {\setlength{\abovedisplayskip}{3pt}
    \begin{align*}
      &V^l_\sigma(x)= \{y\in V\mid y<_\sigma x\},\quad 
        V^l_\sigma[x]=V_\sigma^l(x)\cup\{x\};\quad \text{and}\\
      &V^r_\sigma(x)=\{y\in V\mid y>_\sigma x\},\quad 
        V^r_\sigma[x]=V_\sigma^r(x)\cup\{x\}.
    \end{align*}}
  
  For all $i$, choose an ordering $\sigma_i$ of~$V$ such that
  $\wc_{2r_i}(G_i,\sigma_i)=\wc_{2r_i}(G_i)$. Define the graph $H_i$ with
  vertex set~$V$ by setting
  $E(H_i)=\{uv\mid u\in W_{r_i}(G_i,\sigma_i,v)\}$.

  \begin{claim}\label{cl4}
    For all $i$ we have $\sco_2(H_i,\sigma_i)\le\wc_{2r_i}(G_i)$.
  \end{claim}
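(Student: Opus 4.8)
The goal is to show that if $u$ is strongly $2$-reachable from $v$ in $H_i$ with respect to $\sigma_i$, then $u$ lies in the weakly $2r_i$-reachable set of $v$ in $G_i$ (again with respect to $\sigma_i$). Since $\bigl|W_{2r_i}[G_i,\sigma_i,v]\bigr|\le\wc_{2r_i}(G_i,\sigma_i)=\wc_{2r_i}(G_i)$, that inclusion immediately gives $\bigl|S_2[H_i,\sigma_i,v]\bigr|\le\wc_{2r_i}(G_i)$, which is the claim. So the plan is to fix $i$, drop it from the notation, fix a vertex $v$, take any $u\in S_2[H,\sigma,v]$, and produce a witnessing walk in $G$ of length at most $2r$ from $v$ to $u$ all of whose vertices are $\ge_\sigma u$.

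The case $u=v$ is trivial, so assume $u<_\sigma v$. By definition of strong $2$-reachability in $H$ there is either an edge $uv\in E(H)$, or a path $v\,w\,u$ in $H$ with $w>_\sigma v$ (the single internal vertex must exceed $v$). First I would recall that an edge $ab\in E(H)$ with $a<_\sigma b$ means, by construction, $a\in W_r(G,\sigma,b)$, i.e.\ there is an $a,b$-path in $G$ of length at most $r$ all of whose vertices are $\ge_\sigma a$. For the edge case $uv\in E(H)$: since $u<_\sigma v$ this directly gives a $u,v$-path in $G$ of length $\le r\le 2r$ with all vertices $\ge_\sigma u$, so $u\in W_{2r}[G,\sigma,v]$. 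For the length-$2$ case, we have edges $vw$ and $wu$ in $H$ with $w>_\sigma v>_\sigma u$. The edge $vw$ (with $v<_\sigma w$) yields a $v,w$-path $P_1$ in $G$ of length $\le r$ with all vertices $\ge_\sigma v$; the edge $wu$ (with $u<_\sigma w$) yields a $w,u$-path $P_2$ in $G$ of length $\le r$ with all vertices $\ge_\sigma u$. Concatenating $P_1$ and $P_2$ gives a $v,u$-walk in $G$ of length $\le 2r$; every vertex on $P_1$ is $\ge_\sigma v>_\sigma u$ and every vertex on $P_2$ is $\ge_\sigma u$, so every vertex of the walk is $\ge_\sigma u$. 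Extracting a $v,u$-path from this walk (a subpath of the walk) keeps length $\le 2r$ and keeps all vertices $\ge_\sigma u$, so again $u\in W_{2r}[G,\sigma,v]$.

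Thus in all cases $S_2[H,\sigma,v]\subseteq W_{2r}[G,\sigma,v]$, and taking cardinalities and then the maximum over $v$ gives $\sco_2(H,\sigma)\le\wc_{2r}(G)$, proving the claim. The only point requiring any care --- and the nearest thing to an obstacle --- is checking that concatenating the two reachability paths respects the right ordering constraint at the join vertex $w$ and along $P_1$: one must use $v>_\sigma u$ to upgrade the ``$\ge_\sigma v$'' guarantee on $P_1$ to the ``$\ge_\sigma u$'' guarantee needed for $u$-reachability, and note that passing from a walk to a contained path can only delete vertices, hence preserves both the length bound and the lower-bound-on-order property.
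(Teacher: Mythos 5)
Your proof is correct and follows essentially the same route as the paper's: unpack the two cases of strong $2$-reachability in $H_i$, translate the $H_i$-edges back into short $G_i$-paths via the definition of $E(H_i)$, and concatenate to land in $W_{2r_i}[G_i,\sigma_i,v]$. You are in fact a bit more careful than the paper at two points --- you correctly bound the concatenated walk's vertices by $\ge_{\sigma_i} u$ (the paper overstates this as $\ge_{\sigma_i} v$ for $Q$'s internal vertices, which is not needed and not generally true) and you explicitly note the harmless walk-to-path reduction.
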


  \begin{proof}
    If $w\in S_2(H_i,\sigma_i,v)$, then $w<_{\sigma_i}v$, and either
    $wv\in E(H_i)$ or there is a $u>_{\sigma_i}v$ with $vu,uw\in E(H_i)$.
    In the first case we have
    $w\in W_{r_i}(G_i,\sigma_i,v)\subseteq W_{2r_i}(G_i,\sigma_i,v)$. In
    the second case there are paths $P=v\dots u$ and $Q=u\dots w$ in $G_i$
    of length at most $r_i$ with
    $v\le_{\sigma_i}V(P\cup Q)\ssetminus\{w\}$. This again gives
    $w\in W_{2r_i}(G_i,\sigma_i,v)$.
  \end{proof}

  \noindent
  We construct $\sigma^*$ one vertex at the time, by \emph{collecting} one
  by one vertices from~$V$. Each time a vertex is collected it is deleted
  from the set $U$ of uncollected vertices and put at the end of the
  initial segment of~$\sigma^*$ already constructed. We maintain a vector
  $\bm{m}_v:[k]\to\{0,1,\ldots\}$ for each vertex~$v$. When
  $\bm{m}_v=\bm{0}$, we collect $v$.

  We start without any collected vertex, so $U=V$, and for all $v\in V$ and
  $i\in[k]$ we set $\bm{m}_v(i)=a_i$. We now run the following algorithm.

  \medskip
  \begin{algorithmic}[1]
    \STATE pick any $v\in U$;
    \WHILE {$U\ne\emptyset$}
    \STATE pick any $i\in [k]$ with ${\bm{m}_v(i)\ne0}$;\quad
    \COMMENT{such $i$ exists, since at this point always $v\in U$}
    \STATE $\bm{m}_v(i)\:\leftarrow\:\bm{m}_v(i)-1$;
    \IF{$\bm m_v=\bm0$}
    \STATE collect $v$
    \ENDIF;
    \IF {$N_{H_i}[v]\cap U\ne \varnothing$}\label{L8}
    \STATE $v\:\leftarrow\:\sigma_i$-$\min(N_{H_i}[v]\cap U)$\label{L9}
    \ELSIF {$U\ne \varnothing$}
    \STATE pick any $v\in U$
    \ENDIF;
    \ENDWHILE;
  \end{algorithmic}

  \medskip
  \begin{claim}\label{cl5}
    At any time in the algorithm and for all $i\in[k]$, every uncollected
    vertex $w$ satisfies: the number of collected vertices in
    $N_{H_i}(w)\cap V_{\sigma_i}^r(w)$ is at most
    $\dfrac{A}{a_i}\wc_{2r_i}(G_i)$. In other words,
    \begin{equation}
      \bigl|N_{H_i}(w)\cap V_{\sigma_i}^r(w)\cap V^l_{\sigma^*}(w)\bigr|\le
      \frac{A}{a_i}\wc_{2r_i}(G_i).\label{eq:c5}
    \end{equation}
  \end{claim}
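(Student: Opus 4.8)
The plan is to fix the index $i$, an arbitrary moment $T$ during the run of the algorithm, and a vertex $w$ that is uncollected at time $T$, and then to bound the set $B:=N_{H_i}(w)\cap V^r_{\sigma_i}(w)\cap V^l_{\sigma^*}(w)$ of already-collected $H_i$-neighbours of $w$ that lie above $w$ in $\sigma_i$. The first move is pure bookkeeping on the algorithm: a vertex $u$ is collected only once $\bm{m}_u=\bm 0$, i.e.\ only after exactly $A$ decrements of $\bm{m}_u$, of which exactly $a_i$ touch the $i$-th coordinate; each of these decrements happens while $u\in U$, hence strictly before $u$ is collected, hence before $T$ whenever $u\in B$. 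Every decrement of the $i$-th coordinate of a vertex $u$ is an iteration of the while-loop in which $u$ is the current vertex and $i$ is the index picked. Consequently, if $m$ denotes the number of iterations before $T$ in which the current vertex lies in $N_{H_i}(w)\cap V^r_{\sigma_i}(w)$ and the index picked is $i$, then $m\ge a_i\,|B|$, and it suffices to prove $m\le A\cdot\wc_{2r_i}(G_i)$.

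To bound $m$ I would follow where the walk tracked by the algorithm ``lands'' right after each of those $m$ iterations. Take such an iteration, with current vertex $u$; then $u\in N_{H_i}(w)$ and $w<_{\sigma_i}u$, and since the iteration is before $T$ the vertex $w$ is still uncollected, so $w\in N_{H_i}[u]\cap U$ when line~\ref{L8} is evaluated. Hence line~\ref{L9} fires and the current vertex becomes $u':=\sigma_i$-$\min(N_{H_i}[u]\cap U)$, which satisfies $u'\le_{\sigma_i}w$ because $w$ belongs to that set. From $u'\le_{\sigma_i}w<_{\sigma_i}u$ we get $u'\neq u$, so $u'\in N_{H_i}(u)$, and then the walk $w\,u\,u'$ in $H_i$ (or the trivial walk, when $u'=w$) has its only internal vertex $u$ satisfying $u\ge_{\sigma_i}w$; it therefore witnesses $u'\in S_2[H_i,\sigma_i,w]$. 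So every one of the $m$ iterations is immediately followed by the current vertex landing inside the single set $S_2[H_i,\sigma_i,w]$, whose size is at most $\sco_2(H_i,\sigma_i)\le\wc_{2r_i}(G_i)$ by Claim~\ref{cl4}.

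The final step is to observe that no vertex $y$ can be such a landing point more than $A$ times in the whole run. Every vertex that becomes the current vertex is, at that moment, a member of $U$ --- this is how line~\ref{L9} and the ``pick any'' steps are defined --- so each time $y$ is landed on, one of its coordinates is decremented; after $A$ such events $y$ is collected and never re-enters $U$, hence is never again the current vertex. Summing over $y\in S_2[H_i,\sigma_i,w]$ therefore gives $m\le A\cdot|S_2[H_i,\sigma_i,w]|\le A\cdot\wc_{2r_i}(G_i)$, and dividing by $a_i$ produces the asserted bound $|B|\le\frac{A}{a_i}\wc_{2r_i}(G_i)$. The only slightly delicate points are verifying that line~\ref{L9} genuinely fires (this is exactly where uncollectedness of $w$ is used) and that $w\,u\,u'$ is an admissible strong-$2$-reachability witness (automatic from $u>_{\sigma_i}w$ and $u>_{\sigma_i}u'$). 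The real content of the proof --- and the step most sensitive to the precise design of the collecting procedure --- is the coupling of ``every counted $i$-coordinate decrement bounces the walk down into the fixed set $S_2[H_i,\sigma_i,w]$'' with ``a vertex can absorb at most $A$ such bounces before it is collected''.
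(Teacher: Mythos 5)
Your proof is correct and follows essentially the same strategy as the paper's: each $i$-decrement of a vertex in $B$ forces line~\ref{L9} to deposit the walk into $S_2[H_i,\sigma_i,w]$, and no vertex absorbs more than $A$ such deposits. The only cosmetic difference is that you fold the case $u'=w$ directly into the count $A\cdot|S_2[H_i,\sigma_i,w]|$, whereas the paper handles $w$'s own re-selections as a separate additive term $A$ and then merges it via $A+A|S_2(H_i,\sigma_i,w)| = A|S_2[H_i,\sigma_i,w]|$.
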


  \medskip
  \begin{proof}
    We say that a vertex is \emph{processed} when it plays the role of $v$
    at Line~3 of the algorithm. Observe that each vertex is processed on
    exactly $A$ rounds---on $\bm{m}_v(i)=a_i$ rounds with each index
    $i\in[k]$---before it is collected at Line 6, and then it is never
    processed again.
  
    Suppose $w$ is uncollected at Line~2 of some round of the
    algorithm. Let $s$ be the number of collected vertices $v$ in
    $N_{H_i}(w)\cap V_{\sigma_i}^r(w)$. On each round that such a
    vertex~$v$ was processed with index $i$, the if-clause at Line~8 was
    witnessed by $w$. As~$w$ is uncollected at Line~2, there were at most
    $A$ rounds on which $w$ was chosen at Line~1 or Line~9 to be processed
    next. (If equality holds, then $w$ is the last vertex chosen at Line~9
    of the previous round.)  On all other such rounds, a vertex
    $w'\in N_{H_i}[v]\cap U$ with $w'<_{\sigma_i}w$ was picked to be
    processed next. Clearly, $w'\in S_2(H_i,\sigma_i,w)$. Moreover, as
    $w'\in U$, it is chosen on at most $A$ rounds.

    So all in all we get that
    $s\cdot a_i\le A+A\cdot\bigl|S_2(H_i,\sigma_i,w)\bigr|=
    A\cdot\bigl|S_2[H_i,\sigma_i,w]\bigr|$. Using Claim~\ref{cl4} this
    gives
    \[s\le \frac{A}{a_i}\sco_2[H_i,\sigma_i,w]\le
      \frac{A}{a_i}\wc_{2r_i}(G_i)\]
    as claimed.
  \end{proof}

  \noindent
  Let $\sigma^*$ be the ordering obtained by the algorithm. Take
  $i\in[k]$. We will bound $\bigl|S_{r_i}[G_i,\sigma^*\!,w]\bigr|$ for each
  $w\in V$. First notice that $S_{r_i}[G_i,\sigma^*\!,w]$ is determined at
  the moment~$w$ is collected (since then the sets $V^l_{\sigma^*}[w]$ and
  $V_{\sigma^*}^r[w]$ are known).
  
  For all $u\in S_{r_i}(G_i,\sigma^*\!,w)$, pick a path $P_u=u\dots w$ in
  $G_i$ of length at most $r_i$ with
  $V(\mathring{P}_u)\subseteq V_{\sigma^*}^r(w)$. Let
  $p_u=\sigma_i$-$\min(V(P_u))$. Then
  \begin{equation}\label{eq:4}
    \text{\hbox to 1.5\parindent{(a)\hss}} u<_{\sigma^*}w\qquad
    \text{and}\qquad
    \text{\hbox to 1.5\parindent{(b)\hss}} p_u\le_{\sigma_i}u.
  \end{equation}
  Partition $S_{r_i}(G_i,\sigma^*\!,w)$ by:
  \begin{align*}
    &X_1=\{u\in S_{r_i}(G_i,\sigma^*\!,w)\bigm| p_u=u\},\\
    &X_2=\{u\in S_{r_i}(G_i,\sigma^*\!,w)\bigm| p_u=w\}\quad
      \text{and}\\
    &X_3=\{u\in S_{r_i}(G_i,\sigma^*\!,w)\bigm|p_u<_{\sigma_i}\{u,w\}\}.
  \end{align*}
  
  If $u\in X_1$, then $P_u$ witnesses that $u\in
  W_{r_i}(G_i,\sigma_i,w)$. By the choice of $\sigma_i$ this gives
  $|X_1|\le\wc_{r_i}(G_i)-1\le\wc_{2r_i}(G_i)-1$.

  Next consider a vertex $u\in X_2$. Then $w=p_u\le_{\sigma_i}V(P_u)$, and
  hence $w\in W_{r_i}[G_i,\sigma_i,u]$. By definition, $uw\in E(H_i)$. On
  the other hand, $u<_{\sigma^*}w$ by~(4a).
  Thus we have
  $X_2\subseteq N_{H_i}(w)\cap V_{\sigma_i}^r(w)\cap V^l_{\sigma^*}(w)$.
  By \eqref{eq:c5} this means $|X_2|\le\dfrac{A}{a_i}\wc_{2r_i}(G_i)$.

  Finally, consider a vertex $u\in X_3$. Then
  $p_u\in W_{r_i}(G_i,\sigma_i,u)$ and $p_u\in W_{r_i}(G_i,\sigma_i,w)$. By
  definition, $p_uu\in E(H_i)$. By~(4a), $u<_{\sigma^*}w$, and by~(4b),
  $p_u<_{\sigma_i}u$. Combining this all gives
  $u\in N_{H_i}(p_u)\cap V_{\sigma_i}^r(p_u)\cap V^l_{\sigma^*}(p_u)$. It
  follows that
  \[X_3\subseteq \bigcup_{p\in W_{r_i}(G_i,\sigma_i,w)}N_{H_i}(p)\cap
    V_{\sigma_i}^r(p)\cap V^l_{\sigma^*}(p).\]
  And so \eqref{eq:c5} leads to
  \[|X_3|\le \bigl(\wc_{2r_i}(G_i)-1\bigr)\cdot
    \frac{A}{a_i}\wc_{2r_i}(G_i).\]

  Adding it all together we get
  \begin{align*}
    \bigl|S_{r_i}[G_i,\sigma^*\!,w]\bigr|&=1+|X_1|+|X_2|+|X_3|\\[1mm]
    &\hspace{-5mm}\le 1+\bigl(\wc_{2r_i}(G_i)-1\bigr)+
      \frac{A}{a_i}\wc_{2r_i}(G_i)+ \bigl(\wc_{2r_i}(G_i)-1\bigr)\cdot
      \frac{A}{a_i}\wc_{2r_i}(G_i)\\[1mm]
    &\hspace{-5mm}=\frac{A}{a_i}\bigl(\wc_{2r_i}(G_i)\bigr)^2+
      \wc_{2r_i}(G_i).
  \end{align*}
  Since
  $\sco_{r_i}(G_i,\sigma^*)= \max\limits_{w\in
    V}\bigl|S_{r_i}[G_i,\sigma^*\!,w]\bigr|$, the theorem follows.
\end{proof}

\noindent
We are now ready to prove the results stated in Subsection~\ref{ssec1.3}.
We start with the easiest proof.

\begin{proof}[Proof of Theorem~\ref{thm:graphU}]
  Let $G_1,\ldots,G_k$ and $r_1,\ldots,r_k\in\NN$ as in the statement of
  the theorem. Using Theorem~\ref{th-main} with all $a_i=1$, and hence
  $A=k$, we get that there exists an ordering $\sigma^*$ of~$V$ such that
  for all $i$ we have
  \[\sco_{r_i}(G_i,\sigma^*)\le
    k\cdot\bigl(\wc_{2r_i}(G_i))\bigr)^2+\wc_{2r_i}(G_i)\le
    (k+1)\bigl(\wc_{2r_i}(G_i)\bigr)^2.\qedhere\]
\end{proof}

\medskip
\begin{proof}[Proof of Theorem~\ref{thm:main}]
  Set $n=|G|$. It is easy to check that the result holds if $n\le3$, so
  assume $n\ge4$ and let $k=\bigl\lfloor\log_2(n-2)\bigr\rfloor$.

  If $i\ge k+1$, then we have $i>\log_2(n-2)$, hence $2^i+1>n-1$. This
  means that $\sco_i(G,\sigma^*)\le(2^i+1)\cdot\bigl(\wc_{2i}(G)\bigr)^2$
  trivially holds for any ordering $\sigma^*$.

  For $i=1,\ldots,k$, set $G_i=G$, $r_i=i$ and $a_i=2^{k-i}$. Then
  $A=a_1+\cdots+a_k=2^k-1$. Using Theorem~\ref{th-main}, we find that there
  exists an ordering~$\sigma^*$ of $G$ such that for all $i=1,\ldots,k$ we
  have
  \begin{align*}
    \sco_i(G,\sigma^*)
    &\le \frac{(2^k-1)\cdot\bigl(\wc_{2i}(G)\bigr)^2}{2^{k-i}}
      +\wc_{2i}(G)\\[1mm]
    &\le 2^i\cdot\bigl(\wc_{2i}(G)\bigr)^2+\wc_{2i}(G)\le
      (2^i+1)\cdot\bigl(\wc_{2i}(G)\bigr)^2.
  \end{align*}
  By \eqref{eq1} this proves the bound on $\sco_i(G,\sigma^*)$ for
  $i\le k$, and completes the proof.
\end{proof}

\noindent
We finish with a more general version of Theorem~\ref{thm:main}.

\begin{cor}\label{cor-3}\mbox{}\\*
  For any graph~$G$ and $\epsilon>0$, there exits an ordering $\sigma^*$
  of~$G$ such that for all $r\in\NN$ we have
  \[\sco_r(G,\sigma^*)\le
    \Bigl(\frac{(1+\epsilon)^{r+1}}{\epsilon^2}+1\Bigr)\cdot
    \bigl(\sco_{2r}(G)\bigr)^{4r}.\]
\end{cor}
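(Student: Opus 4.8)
The plan is to mimic the proof of Theorem~\ref{thm:main}, but replace the dyadic weights $a_i=2^{k-i}$ by geometric weights tuned to $\epsilon$, so that the ratio $A/a_i$ grows like $(1+\epsilon)^r$ instead of $2^r$, at the cost of a worse constant $1/\epsilon^2$. First I would dispose of the trivial range of $r$: set $n=|G|$, and note that for $r$ large enough that $2^r+1>n-1$ (say) the claimed bound holds vacuously for any ordering, since $\sco_r(G,\sigma^*)\le n$ always. So it suffices to produce a single ordering that works for all $r\le k$ for an appropriate cutoff $k$ depending on $n$.

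For the main range, I would apply Theorem~\ref{th-main} with $G_i=G$, $r_i=i$ for $i=1,\dots,k$, and weights $a_i=\lceil (1+\epsilon)^{-i}\rceil$ (or, to keep the arithmetic clean, $a_i$ a positive integer with $a_i\ge (1+\epsilon)^{k-i}$ chosen so that $A=\sum_i a_i$ is controlled). The point is that with $a_i\asymp (1+\epsilon)^{k-i}$ we get $A=\sum_{i=1}^k a_i = \sum_{j=0}^{k-1}(1+\epsilon)^j \cdot(\text{const})\le \frac{(1+\epsilon)^k}{\epsilon}\cdot(\text{const})$, using the geometric series bound $\sum_{j\ge 0}(1+\epsilon)^j$ truncated at $k$ terms is $\frac{(1+\epsilon)^k-1}{\epsilon}$. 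Then $\dfrac{A}{a_i}\le \dfrac{(1+\epsilon)^k/\epsilon}{(1+\epsilon)^{k-i}} = \dfrac{(1+\epsilon)^i}{\epsilon}$. Plugging into Theorem~\ref{th-main} gives
\[
\sco_i(G,\sigma^*)\le \frac{A}{a_i}\bigl(\wc_{2i}(G)\bigr)^2+\wc_{2i}(G)
\le \frac{(1+\epsilon)^i}{\epsilon}\bigl(\wc_{2i}(G)\bigr)^2+\wc_{2i}(G),
\]
and then bounding $\wc_{2i}(G)\le\bigl(\sco_{2i}(G)\bigr)^{2i}$ via~\eqref{eq1} and absorbing the additive $\wc_{2i}(G)$ term into the leading term (it is at most $(\wc_{2i}(G))^2$) yields something of the shape $\bigl(\tfrac{(1+\epsilon)^i}{\epsilon}+1\bigr)(\sco_{2i}(G))^{4i}$. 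To hit the exact stated constant $\tfrac{(1+\epsilon)^{r+1}}{\epsilon^2}+1$ I would be slightly more careful in rounding $a_i$: taking $a_i=\bigl\lceil (1+\epsilon)^{k-i}\bigr\rceil$ introduces an extra factor of at most $(1+\epsilon)/\epsilon$ somewhere (each ceiling costs at most a factor, and summing the ceilings loses another $1/\epsilon$ relative to the clean geometric sum), which is exactly what upgrades one power of $1+\epsilon$ and one power of $1/\epsilon$. So the extra $(1+\epsilon)/\epsilon$ over the naive $\tfrac{(1+\epsilon)^r}{\epsilon}$ accounts for the discrepancy between my rough estimate and the target.

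The main obstacle I anticipate is purely bookkeeping: choosing the integer weights $a_i$ so that \emph{simultaneously} each $a_i\ge(1+\epsilon)^{k-i}$ (so that $A/a_i$ has the right upper bound) and $A=\sum a_i$ is not much larger than $\tfrac{(1+\epsilon)^k}{\epsilon}$, while $\epsilon$ is an arbitrary positive real (possibly tiny, so that $(1+\epsilon)^{k-i}$ is close to $1$ for all $i$ and the ceilings dominate). One clean way around this is to pick $k$ (the cutoff) large enough that $(1+\epsilon)^k$ is at least, say, $1/\epsilon$, so that the geometric terms eventually dominate the $+1$ from each ceiling; for the finitely many small $i$ where $(1+\epsilon)^{k-i}<1/\epsilon$ one checks the bound directly since there the required $\sco_i$ bound is already weak. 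Alternatively, and more simply, I would just carry the ceilings through: $A=\sum_{i=1}^k\lceil(1+\epsilon)^{k-i}\rceil\le k+\sum_{i=1}^k(1+\epsilon)^{k-i}\le k+\frac{(1+\epsilon)^k}{\epsilon}$, and use $a_i\ge(1+\epsilon)^{k-i}$ and $a_i\ge 1$ together with $k\le\log_{1+\epsilon}(\text{something about }n)$ to fold the $k$ into the geometric term, since in the relevant range $(1+\epsilon)^k$ is comparable to $n$ which dwarfs $k$. Once the weight choice is fixed the rest is a direct substitution into Theorem~\ref{th-main} exactly as in the proof of Theorem~\ref{thm:main}, followed by the inequality $\wc_{2r}(G)\le(\sco_{2r}(G))^{2r}$ from~\eqref{eq1}; there is no new combinatorial idea needed beyond Theorem~\ref{th-main} itself.
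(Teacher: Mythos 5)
Your overall strategy is the same as the paper's: dispose of large $r$ by a trivial cutoff $k$, then invoke Theorem~\ref{th-main} on $G_1=\cdots=G_k=G$ with $r_i=i$ and geometric weights $a_i$ tuned to $\epsilon$. You correctly identify that the proof is nothing beyond Theorem~\ref{th-main} plus bookkeeping, and you correctly identify the only obstacle: rounding $a_i$ to an integer costs up to $+1$ per term, producing an extra additive $+k$ in $A$ that must somehow be absorbed into the geometric sum. But you propose two workarounds and carry neither through. Your first (choose $k$ so that $(1+\epsilon)^k\ge 1/\epsilon$) does not by itself suppress the $+k$ contribution to $A$; your second (compare $k$ with $(1+\epsilon)^k/\epsilon$ via the choice of cutoff) requires a quantitative estimate that you only gesture at, and it is not at all clear that either route lands on the exact stated constant $\frac{(1+\epsilon)^{r+1}}{\epsilon^2}+1$.

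The paper sidesteps the issue entirely with a cleaner weight choice: $a_i=\bigl\lceil(1+\epsilon)^{k+1-i}-1\bigr\rceil$. Since $\lceil x-1\rceil\le x$ for $x\ge 1$, this gives the ceiling-free upper bound $A\le\sum_{i=1}^{k}(1+\epsilon)^{k+1-i}<\frac{(1+\epsilon)^{k+1}}{\epsilon}$ with no extra $+k$; and the lower bound $a_i\ge(1+\epsilon)^{k+1-i}-1>\epsilon\cdot(1+\epsilon)^{k-i}$ is exactly what produces the $\epsilon^2$ in the denominator and the shift to $(1+\epsilon)^{i+1}$ rather than $(1+\epsilon)^i$ in the final constant. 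Plugging in then gives $A/a_i\le\frac{(1+\epsilon)^{i+1}}{\epsilon^2}$ and the corollary follows immediately. So: right approach, correct diagnosis of where the remaining work lies, but the specific rounding trick that makes the constant come out exactly is missing, and your proposed patches are left as sketches. (Also, your first candidate $a_i=\lceil(1+\epsilon)^{-i}\rceil$ is a typo --- it equals $1$ for all $i$ --- though you correct it to $\lceil(1+\epsilon)^{k-i}\rceil$ later.)
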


\medskip
\begin{proof}
  We follow the proof of Corollary~\ref{thm:main} above. First choose the
  positive integer~$k$ such that
  \[\Bigl(\dfrac{(1+\epsilon)^{(k+1)+1}}{\epsilon^2}+1\Bigr)\ge|G|.\]
  This means that the bound on $\sco_i(G,\sigma^*)$ trivially holds for
  $r\ge k+1$, for any ordering $\sigma^*$.

  Now for $i=1,\ldots,k$, set $G_i=G$, $r_i=i$ and
  $a_i=\bigl\lceil(1+\epsilon)^{k+1-i}-1\bigr\rceil$. Then we can estimate
  \[A= a_1+\cdots+a_k\le \sum_{i=1}^{k}(1+\epsilon)^{k+1-i}=
    \frac{(1+\epsilon)^{k+1}-(1+\epsilon)}{\epsilon}<
    \frac{(1+\epsilon)^{k+1}}{\epsilon}.\]
  For all $i=1,\ldots,k$ we get
  \[a_i= \bigl\lceil(1+\epsilon)^{k+1-i}-1\bigr\rceil\ge
    (1+\epsilon)^{k+1-i}-1> \epsilon\cdot(1+\epsilon)^{k-i}.\]
  Using Theorem~\ref{th-main} again, there exists an ordering~$\sigma^*$ of
  $G$ such that for all $i=1,\ldots,k$ we have
  \begin{align*}
    \sco_i(G,\sigma^*)
    &\le  \frac{(1+\epsilon)^{k+1}\cdot\bigl(\wc_{2i}(G)\bigr)^2}%
      {\epsilon^2\cdot(1+\epsilon)^{k-i}}+\wc_{2i}(G)\\[1mm]
    &\le \frac{(1+\epsilon)^{i+1}}{\epsilon^2}\cdot
      \bigl(\wc_{2i}(G)\bigr)^2+\wc_{2i}(G)\le
    \Bigl(\frac{(1+\epsilon)^{i+1}}{\epsilon^2}+1\Bigr)\cdot
      \bigl(\wc_{2i}(G)\bigr)^2.
  \end{align*}
  By \eqref{eq1} this proves the bound on $\sco_i(G,\sigma^*)$ for
  $i\le k$, and completes the proof.
\end{proof}

\section{Algorithmic Aspects}

Our main results, Theorems~\ref{thm:main} and \ref{th-main}, guarantee the
existence of a specific ordering of the vertices of a graph. But the
results do not indicate if such an ordering can be found efficiently. The
proof of Theorem~\ref{th-main} is in fact algorithmic. If for every
$i=1,\ldots,k$ we have an ordering~$\sigma_i$ of the vertex set such that
$\wc_{2r_i}(G_i,\sigma_i)=\wc_{2r_i}(G_i)$, then the proof gives an
algorithm that finds an ordering $\sigma^*$ in $O(A\cdot|V|)$ steps. (We
start with a vector $\bm{m}$ with $\bm{m}_v(i)=a_i$ for each vertex $v$,
and in each iteration of the while loop one coordinate $\bm{m}_v$ gets
reduced by one.)

So the question about the existence of an efficient algorithm to find a
uniform ordering depends on the existence of an efficient algorithm to find
optimal orderings for the generalized coloring numbers. It is very unlikely
that this is possible, though. Grohe et al.\ \cite{GKRSS,GKRSSj} proved
that computing $\wc_r(G)$ is NP-complete for all fixed $r\ge3$. Note that
calculating the coloring number $\co(G)$ can be done in polynomial time; it
is an interesting open problem to determine the computational complexity
status of finding $\wc_2(G)$.

Nevertheless, it is possible to find orderings that \emph{approximate} the
generalized coloring numbers, using ideas developed in Dvo{\v
  r}\'ak~\cite{Dv:CF}. We need a new concept. Let $r\in\NN$. For a
graph~$G$, ordering $\sigma\in\Pi$ and $x\in V$, let $b_r[G,\sigma,x]$ be
the maximum number of paths of length at most~$r$ that have $x$ as one end,
whose other end~$y$ satisfies $y\le_\sigma x$, and that are vertex-disjoint
apart from $x$. Clearly, we can assume that the internal vertices of the
paths appear after $x$ in the ordering. The \emph{$r$-admissibility
  of~$G$}, denoted $\adm_r(G)$, is defined as\,\footnote{\,The definition
  of $\adm_r(G)$ in~\cite{Dv:CF} does not include the vertex~$x$ in the set
  $b_r[G,\sigma,x]$; we include it here for consistency with the now
  standard convention for generalized coloring numbers.}
\[\adm_r(G,\sigma)=\max_{x\in V}b_r[G,\sigma,x];\quad
  \adm_r(G)=\min_{\sigma\in\Pi}\,\adm_r(G,\sigma).\]
It is obvious that once again $\adm_1(G)$ is just the coloring number
$\co(G)$; while we also have $\adm_r(G)\le\sco_r(G)\le\wc_r(G)$. On the
other hand, Dvo{\v r}\'ak~\cite[Lemma~6]{Dv:CF} gives the existence of a
function $F:\NN\times\NN\to\NN$ such that
$\wc_r(G)\le F\bigl(r,\adm_r(G)\bigr)$ for all $r\in\NN$ and graphs~$G$.

Dvo{\v r}\'ak~\cite{Dv:CF} also gives a simple algorithm that, given
$r\in\NN$ and a graph~$G$, in $O(r^3\cdot|G|)$ steps finds an ordering
$\sigma$ of~$G$ such that $\adm_r(G,\sigma)\le r\cdot\adm_r(G)$.

Combining all this with the proof of Theorem~\ref{th-main} gives the
following algorithmic version of that theorem.

\begin{thm}\label{th-maina}\mbox{}\\*
  There exists a function $\phi:\NN\times\NN\to\NN$ and an algorithm
  $\mathcal{A}$ such that the following holds. Let $G_1,\ldots,G_k$ be a
  collection of graphs, all on the same vertex set~$V$, and
  $a_1,\ldots,a_k$ and $r_1,\ldots,r_k$ be positive integers. Set
  $A=a_1+\cdots+a_k$. Then algorithm $\mathcal{A}$ gives an ordering
  $\sigma^*$ of the common vertex set~$V$ such that for all $i=1,\ldots,k$
  we have
  \[\sco_{r_i}(G_i,\sigma^*)\le
  \frac{A}{a_i}\cdot\phi\bigl(r_i,\wc_{2r_i}(G_i)\bigr).\]
  The number of steps algorithm $\mathcal{A}$ requires is polynomial in $A$
  and $|G|$.
\end{thm}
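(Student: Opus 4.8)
The plan is to revisit the proof of Theorem~\ref{th-main} and replace its one non-constructive ingredient by an approximation. Recall that in that proof the only ``expensive'' choice is, for each~$i$, an ordering $\sigma_i$ of~$V$ with $\wc_{2r_i}(G_i,\sigma_i)=\wc_{2r_i}(G_i)$; the graphs $H_i$ are then built from the~$\sigma_i$, and everything after that --- Claims~\ref{cl4} and~\ref{cl5}, the partition of $S_{r_i}(G_i,\sigma^*\!,w)$ into $X_1,X_2,X_3$, and the while-loop producing $\sigma^*$ --- is already algorithmic, running in $O(A\cdot|V|)$ steps once the $H_i$ are known. Moreover, nothing in that analysis uses that $\sigma_i$ is \emph{optimal}: every inequality that invokes $\wc_{2r_i}(G_i)$ is in fact an inequality in terms of $\wc_{2r_i}(G_i,\sigma_i)$. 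Hence for \emph{any} orderings $\sigma_i$ the same argument verbatim yields
\[\sco_{r_i}(G_i,\sigma^*)\le \frac{A}{a_i}\bigl(\wc_{2r_i}(G_i,\sigma_i)\bigr)^2+\wc_{2r_i}(G_i,\sigma_i).\]

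So it suffices to compute, for each~$i$, an ordering $\sigma_i$ whose weak $2r_i$-coloring number is bounded by a function of $r_i$ and $\wc_{2r_i}(G_i)$ alone. This is where I would invoke Dvo\v{r}\'ak's tools. First, we may assume $r_i<|G|$ for all~$i$: a simple path has fewer than $|G|$ edges, so replacing any larger $r_i$ by $|G|-1$ leaves all the relevant reachability sets, and all the quantities above, unchanged. Then Dvo\v{r}\'ak's algorithm, run on $G_i$ with parameter $2r_i$, produces in $O\bigl((2r_i)^3\cdot|G|\bigr)=O(|G|^4)$ steps an ordering $\sigma_i$ with $\adm_{2r_i}(G_i,\sigma_i)\le 2r_i\cdot\adm_{2r_i}(G_i)\le 2r_i\cdot\wc_{2r_i}(G_i)$. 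Combining this with the ordering-preserving form of the bound underlying Dvo\v{r}\'ak's Lemma~6 --- namely that $\wc_r(G,\sigma)$ is bounded by some function $F\bigl(r,\adm_r(G,\sigma)\bigr)$ for every single ordering~$\sigma$ --- gives $\wc_{2r_i}(G_i,\sigma_i)\le F\bigl(2r_i,\,2r_i\,\wc_{2r_i}(G_i)\bigr)=:\psi\bigl(r_i,\wc_{2r_i}(G_i)\bigr)$.

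Plugging this into the displayed bound and using $A/a_i\ge1$ to absorb the additive term yields $\sco_{r_i}(G_i,\sigma^*)\le \frac{A}{a_i}\,\phi\bigl(r_i,\wc_{2r_i}(G_i)\bigr)$ with $\phi(r,t)=\psi(r,t)^2+\psi(r,t)$ (taken non-decreasing in~$r$ to account for the truncation above), as required. For the running time of algorithm~$\mathcal{A}$: we compute the $k\le A$ orderings~$\sigma_i$ in $O(A\cdot|G|^4)$ steps; from each~$\sigma_i$ we build~$H_i$ by computing the sets $W_{r_i}(G_i,\sigma_i,v)$ (a bounded-depth breadth-first search from each vertex, polynomial in $|G|$); and the while-loop of Theorem~\ref{th-main} then runs in $O(A\cdot|V|)$ steps. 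The total is polynomial in $A$ and $|G|$.

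The step I expect to be the crux is the one invoking Dvo\v{r}\'ak: the statement cited earlier bounds the \emph{invariant} $\wc_r(G)$ by a function of the \emph{invariant} $\adm_r(G)$, whereas the proof above needs the \emph{per-ordering} inequality $\wc_r(G,\sigma)\le F\bigl(r,\adm_r(G,\sigma)\bigr)$. One therefore has to check that the proof of Lemma~6 --- or an alternative direct fan-type (Menger) argument, extracting many vertex-disjoint back-paths from a large weakly reachable set --- in fact works ordering-by-ordering; everything else in the plan is bookkeeping layered on top of the already-algorithmic proof of Theorem~\ref{th-main}.
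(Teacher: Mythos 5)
Your proposal is correct and follows essentially the route the paper sketches: run Dvo\v{r}\'ak's admissibility-approximation algorithm to get each $\sigma_i$, feed these into the algorithmic proof of Theorem~\ref{th-main}, and observe that that proof only ever uses $\wc_{2r_i}(G_i,\sigma_i)$ rather than optimality of $\sigma_i$. The per-ordering step you flag as the crux is indeed available: the chain $\sco_r(G,\sigma)\le 1+a(a-1)^{r-1}$ with $a=\adm_r(G,\sigma)$ and $\wc_r(G,\sigma)\le\bigl(\sco_r(G,\sigma)\bigr)^r$ holds ordering-by-ordering (the paper's citation of Dvo\v{r}\'ak's Lemma~6 in invariant form is a slight understatement of what that lemma actually proves), so your $\psi$ exists and the argument closes.
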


\noindent
The proof of Theorem~\ref{thm:main} shows that we can use the theorem above
with $A\le n$ to get an algorithmic version of that theorem.

\begin{thm}\label{thm:maina}\mbox{}\\*
  There exists a function $\phi':\NN\times\NN\to\NN$ and an algorithm
  $\mathcal{A}'$ such that the following holds. For any graph~$G$,
  algorithm $\mathcal{A}'$ gives an ordering~$\sigma^*$ of~$G$ such that
  $\sco_r(G,\sigma^*)\le\phi'\bigl(r,\sco_{2r}(G)\bigr)$ for all
  $r\in\NN$. The number of steps algorithm $\mathcal{A}'$ requires is
  polynomial in~$|G|$.
\end{thm}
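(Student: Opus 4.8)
The plan is to combine three ingredients that are already in place: the algorithmic version of the technical theorem (Theorem~\ref{th-maina}), the proof strategy of Theorem~\ref{thm:main}, and Dvo\v{r}\'ak's approximation algorithm for admissibility together with the comparison inequalities $\adm_r(G)\le\sco_r(G)\le\wc_r(G)\le F(r,\adm_r(G))$. First I would recall that the proof of Theorem~\ref{thm:main} instantiates Theorem~\ref{th-main} with $G_i=G$, $r_i=i$ and $a_i=2^{k-i}$ for $i=1,\ldots,k$ where $k=\lfloor\log_2(n-2)\rfloor$, so that $A=2^k-1\le n$; for $i\ge k+1$ the desired bound holds trivially for \emph{any} ordering. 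The point is that this same instantiation works verbatim with Theorem~\ref{th-maina} in place of Theorem~\ref{th-main}, because there $A\le n=|G|$, so the running time that is ``polynomial in $A$ and $|G|$'' becomes polynomial in $|G|$ alone.

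The key steps, in order, would be: (1) given $G$, set $n=|G|$, $k=\lfloor\log_2(n-2)\rfloor$, and for $i=1,\ldots,k$ put $G_i=G$, $r_i=i$, $a_i=2^{k-i}$; (2) run algorithm $\mathcal{A}$ of Theorem~\ref{th-maina} on this input to obtain $\sigma^*$ in time polynomial in $A\le n$ and $|G|$, hence polynomial in $|G|$; (3) for $i\le k$ conclude $\sco_i(G,\sigma^*)\le\frac{A}{a_i}\phi(i,\wc_{2i}(G))\le 2^i\,\phi(i,\wc_{2i}(G))$, and bound $\wc_{2i}(G)\le(\sco_{2i}(G))^{2i}$ via \eqref{eq1}; (4) for $i\ge k+1$ note $2^i+1>n-1$, so any bound of the form $\phi'(i,\cdot)\ge n$ is satisfied trivially. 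Combining (3) and (4), define $\phi'(r,s)=\max\bigl\{2^r\cdot\phi(r,s^{2r}),\ \text{(the trivial-regime threshold, e.g.\ something}\ge 2^r+1)\bigr\}$; one checks that with this choice $\sco_r(G,\sigma^*)\le\phi'(r,\sco_{2r}(G))$ for all $r\in\NN$. The only subtlety is making the trivial regime quantitatively fit the single function $\phi'$: for $r\ge k+1$ we have $n< 2^r+1$, so it suffices to take $\phi'(r,s)\ge 2^r+1$ for all $s$, which is harmless since we are free to enlarge $\phi'$.

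Before all of this, I would need to justify that Theorem~\ref{th-maina} itself holds, i.e.\ spell out the sentence ``Combining all this with the proof of Theorem~\ref{th-main} gives the following algorithmic version.'' The idea is that the proof of Theorem~\ref{th-main} used, for each $i$, an ordering $\sigma_i$ that \emph{exactly} achieves $\wc_{2r_i}(G_i)$; since computing such an ordering is NP-hard, we instead use Dvo\v{r}\'ak's $O(r^3|G|)$ algorithm to get an ordering with $\adm_{2r_i}(G_i,\sigma_i)\le 2r_i\cdot\adm_{2r_i}(G_i)$, and then $\wc_{2r_i}(G_i,\sigma_i)\le F(2r_i,\adm_{2r_i}(G_i,\sigma_i))\le F(2r_i,2r_i\cdot\wc_{2r_i}(G_i))$. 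Feeding this $\sigma_i$ into the (already algorithmic) collecting procedure of Theorem~\ref{th-main}, which runs in $O(A\cdot|V|)$ steps, yields $\sigma^*$ with $\sco_{r_i}(G_i,\sigma^*)\le\frac{A}{a_i}(\wc_{2r_i}(G_i,\sigma_i))^2+\wc_{2r_i}(G_i,\sigma_i)\le\frac{A}{a_i}\phi(r_i,\wc_{2r_i}(G_i))$ for a suitable $\phi$ built from $F$. So the real content is bookkeeping: tracking how the approximation factor propagates through the bound of Theorem~\ref{th-main} and packaging everything into the two functions $\phi$ and $\phi'$.

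The main obstacle I anticipate is not any deep mathematics but the careful definition of $\phi$ and $\phi'$ so that a \emph{single} function works uniformly over all $r$ and all input graphs, including the degenerate small-$n$ cases and the trivial large-$r$ regime; in particular one must make sure that the ``polynomial in $|G|$'' claim survives the substitution $A\le n$ and that the trivial regime ($r\ge k+1$) is absorbed into $\phi'$ rather than handled by a separate case in the statement. Everything else is a routine specialization of results already proved in the excerpt.
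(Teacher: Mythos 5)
Your proposal is correct and takes essentially the same approach the paper intends: the paper's entire proof of Theorem~\ref{thm:maina} is the one-line remark that the proof of Theorem~\ref{thm:main} goes through with Theorem~\ref{th-maina} in place of Theorem~\ref{th-main}, using $A\le n$ to guarantee polynomial running time; your steps (1)--(4), together with the observation that $r\ge k+1$ forces $n\le 2^r+1$ so any ordering works in that regime, are exactly the details the paper is eliding, and your definition of $\phi'$ (after the harmless step of replacing $\phi$ by a second-argument-monotone majorant) packages this into a single function as required.
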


\medskip
\noindent
Finally, we formulate an algorithmic version of Corollary~\ref{cor:1}.

\begin{cor}\label{cor:1a}\mbox{}\\*
  There exists an algorithm $\mathcal{A}^*$ such that the following holds.
  A graph class $\mathcal{G}$ has bounded expansion if and only if there
  exists a function $c^*:\NN\to\NN$, such that for every graph
  $G\in\mathcal{G}$, algorithm $\mathcal{A}^*$ gives an ordering $\sigma^*$
  of $G$ such that $\sco_r(G,\sigma^*)\le c^*(r)$ for all $r\in\NN$. The
  number of steps algorithm $\mathcal{A}^*$ requires is polynomial
  in~$|G|$.
\end{cor}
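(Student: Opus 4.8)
The plan is to let $\mathcal{A}^*$ be precisely the algorithm $\mathcal{A}'$ supplied by Theorem~\ref{thm:maina}: it takes a graph $G$ as its only input, runs in a number of steps polynomial in $|G|$, and returns an ordering $\sigma^*$ with $\sco_r(G,\sigma^*)\le\phi'\bigl(r,\sco_{2r}(G)\bigr)$ for all $r\in\NN$, where $\phi':\NN\times\NN\to\NN$ is a fixed universal function. With this choice the running-time clause of the corollary is immediate from Theorem~\ref{thm:maina}, so it remains to establish the biconditional, both directions of which are essentially bookkeeping with Definition~\ref{def1}(a).

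For the ``if'' direction I would argue as follows. Assume there is a function $c^*:\NN\to\NN$ such that for every $G\in\mathcal{G}$ the ordering $\sigma^*$ that $\mathcal{A}^*$ produces on $G$ satisfies $\sco_r(G,\sigma^*)\le c^*(r)$ for all $r$. Since $\sco_r(G)=\min_{\sigma\in\Pi(G)}\sco_r(G,\sigma)\le\sco_r(G,\sigma^*)$, this yields $\sco_r(G)\le c^*(r)$ for all $r$ and all $G\in\mathcal{G}$, and hence $\mathcal{G}$ has bounded expansion by Definition~\ref{def1}(a), with $c^*$ itself serving as the required function.

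For the ``only if'' direction, assume $\mathcal{G}$ has bounded expansion and fix, via Definition~\ref{def1}(a), a function $c:\NN\to\NN$ with $\sco_r(G)\le c(r)$ for all $r$ and all $G\in\mathcal{G}$. It is convenient first to replace the function $\phi'$ of Theorem~\ref{thm:maina} by $\widehat\phi'(r,s)=\max_{0\le s'\le s}\phi'(r,s')$, which is non-decreasing in its second argument and still dominates $\phi'$; the conclusion of Theorem~\ref{thm:maina} then gives $\sco_r(G,\sigma^*)\le\widehat\phi'\bigl(r,\sco_{2r}(G)\bigr)$ for the ordering $\sigma^*$ that $\mathcal{A}^*$ outputs on $G$. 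Now for any $G\in\mathcal{G}$ we have $\sco_{2r}(G)\le c(2r)$, so monotonicity of $\widehat\phi'$ in its second argument gives $\sco_r(G,\sigma^*)\le\widehat\phi'\bigl(r,c(2r)\bigr)$ for all $r$. Hence $c^*(r)=\widehat\phi'\bigl(r,c(2r)\bigr)$ witnesses the right-hand side of the biconditional.

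There is no genuine obstacle here; this is a direct corollary of Theorem~\ref{thm:maina}. The one point to be careful about is the order of the quantifiers: the algorithm $\mathcal{A}^*$ must be fixed once and for all, independently of $\mathcal{G}$, with only the bounding function $c^*$ (and the step count, through $|G|$) allowed to depend on the class, and $\mathcal{A}^*$ must not be given $r$ in advance. This is exactly the form in which Theorem~\ref{thm:maina} is stated, with $\mathcal{A}'$ and $\phi'$ universal and the running time polynomial in $|G|$ uniformly, so nothing further is needed.
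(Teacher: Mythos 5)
Your proposal is correct and takes the same route the paper intends: the corollary is derived directly from Theorem~\ref{thm:maina} by taking $\mathcal{A}^*=\mathcal{A}'$, with the ``if'' direction being immediate from $\sco_r(G)\le\sco_r(G,\sigma^*)$ and Definition~\ref{def1}(a), and the ``only if'' direction obtained by composing the class's bounding function $c$ with $\phi'$. The replacement of $\phi'$ by the monotonized $\widehat\phi'$ is a small technicality the paper leaves implicit, and your attention to it (and to the quantifier order making $\mathcal{A}^*$ independent of $\mathcal{G}$) is exactly the right bookkeeping.
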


\section{Discussion}

The original motivation in \cite{KY} for defining generalized coloring
numbers was to study various game theoretic questions, including
generalized game coloring numbers and their applications to other games. It
was a major surprise that generalized coloring numbers could provide
characterizations of sparse classes; indeed even generalized game coloring
numbers provide these characterizations. Just as ordinary coloring numbers
have proved useful in sparsity theory, one might expect that game coloring
numbers should find applications. Prior to this paper, and aside from the
characterization just mentioned, we know only one other application to a
non-game problem. In \cite{KK-Pack}, the game strong $2$-coloring number is
used to provide improved bounds for Bollob\'as-Eldridge-type questions on
packing. In this paper, while we used game coloring techniques, we did not
apply any theorems from that area. We limited the competitive aspects of
the theory by enforcing a prioritization for the goals of multiple players
(graphs) using the vector $\bm{m}$. This draws on ideas from the
\emph{Harmonious Strategy} in \cite{KY2}. We expect that those ideas can be
used in other (non-game) settings as well. Other applications of the
Harmonious Strategy include \cite{KY3,YZ1,YZ2}; \cite{KY3} and \cite{YZ2}
address non-game problems.

\medskip
After solving Problem~\ref{prob:Dv}, it is natural to ask how good our
answer is. In other words: \emph{For $c:\mathbb N\to \mathbb N$, what is
  the smallest function $c^*:\mathbb N\to \mathbb N$ such that for all
  $G\in \mathcal{G}_c$ there is an ordering $\sigma^*\in \Pi(G)$ such that
  all $r\in \mathbb N$ satisfy $\sco_r(G,\sigma^*)\le c^*(r)$?}  Recall
that $\varphi=\frac12(1+\sqrt5)\approx 1.62$. Example~\ref{exa:counter} and
Theorem~\ref{thm:main} show that
\[.08c(r)^\varphi\le c^*(r)\le(2^r+1)\cdot c(2r)^{4r}.\]
The lower bound is polynomial in $c(r)$, while the upper bound is
exponential in~$c(2r)$. We don't have enough evidence to make a justified
guess on the right order of $c^*$ in terms of~$c$.

The main result in~\cite{KY}, Theorem~\ref{thm:gcolBound} in this paper,
gives an upper bound of $\gc_r(G)$ in terms of $\sco_{2r}(G)$. It is shown
in~\cite{KY} that $\gc_r(G)$ cannot be bounded in terms of
$\sco_{2r-1}(G)$. Hence it is tempting to conjecture that $c^*(r)$ cannot
be upper bounded in terms of $c(2r-1)$, but we have been unable to find
examples of graphs that confirm this.

\subsection*{Acknowledgment}

The authors thank Patrice Ossona de Mendez for suggesting the examples in
Subsection~\ref{ssec2.2}, and two anonymous referees for careful reading.

\bibliographystyle{plainurl}
\bibliography{UniversalOrderings}

\end{document}